\documentclass[10pt, a4paper, twoside,reqno]{amsart}

\addtolength{\voffset}{0cm} 
\addtolength{\textheight}{1cm} 
\addtolength{\hoffset}{-2cm}
\addtolength{\textwidth}{4cm}

\setlength{\parskip}{2mm}
\linespread{1.2}
\usepackage{verbatim}

\usepackage{fancyhdr}
\usepackage{mathabx}


\makeatletter

\def\section{\@startsection{section}{1}%
	\z@{.7\linespacing\@plus\linespacing}{.5\linespacing}%
	{\normalfont \Large\scshape\centering}}

\def\subsection{\@startsection{subsection}{2}%
	\z@{.5\linespacing\@plus.7\linespacing}{.5\linespacing}%
	{\normalfont\large\bfseries}}

\def\subsubsection{\@startsection{subsubsection}{3}%
	\z@{.5\linespacing\@plus.7\linespacing}{.5\linespacing}%
	{\normalfont\itshape}}




\usepackage[usenames, dvipsnames]{color}
\definecolor{darkblue}{rgb}{0.0, 0.0, 0.45}

\usepackage[colorlinks	= true,
raiselinks	= true,
linkcolor	= darkblue, 
citecolor	= Mahogany,
urlcolor	= ForestGreen,
pdfauthor	= {Georgios Darivianakis},
pdftitle	= {},
pdfkeywords	= {},
pdfsubject	= {},
plainpages	= false]{hyperref}

\allowdisplaybreaks
\date{\today}


\usepackage{graphicx, psfrag}
\usepackage{amsthm}
\usepackage{amsmath} 
\usepackage{amssymb}  
\usepackage[noadjust]{cite}
\usepackage{color}
\usepackage{enumerate}
\usepackage{subfigure}
\usepackage{multirow}
\usepackage{booktabs}
\usepackage{subfigure}
\graphicspath{{drawing/}}

\usepackage{tikz}
\usetikzlibrary{calc,patterns,decorations.pathmorphing,decorations.markings}

\usepackage{xspace}

\newtheorem{proposition}{Proposition}
\newtheorem{lemma}{Lemma}
\newtheorem{theorem}{Theorem}

\newtheorem{example}{Example}
\newtheorem{corollary}{Corollary}

\newcommand{\st}{{\textrm{s.t.}}}
\newcommand{\R}{{\mathbb R}}

\newcommand{\mb}{\mathbb}
\newcommand{\mc}{\mathcal}

\newcommand{\bs}{\boldsymbol}


\newcommand{\ol}[1]{\overline{#1}}
\newcommand{\wt}[1]{\widetilde{#1}}
\newcommand{\wh}[1]{\widehat{#1}}


\newcommand{\Ni}{{\scriptscriptstyle \mathcal N_i \scriptstyle}}
\newcommand{\oNi}{{\scriptscriptstyle \overline{\mathcal N}_i \scriptstyle}}

\newcommand{\oNj}{{\scriptscriptstyle \overline{\mathcal N}_j \scriptstyle}}
\newcommand{\Mu}{{\scriptscriptstyle \mathcal M \scriptstyle}}
\newcommand{\AT}{{\scriptscriptstyle \mc{AT} \scriptstyle}}
\newcommand{\CC}{{\scriptscriptstyle \mc{CC} \scriptstyle}}
\newcommand{\FS}{{\scriptscriptstyle \mc{FS} \scriptstyle}}


\newcommand{\PCd}{Problem~$(\text{C}:\mc {SC}(\bs \xi_\Mu))$\xspace}

\newcommand{\PSCs}{Problem~\eqref{Semi-Centralized}\xspace}
\newcommand{\PSCd}{Problem~$(\text{PN}:\mc {SC}(\bs \xi_\oNi))$\xspace}

\newcommand{\PDs}{Problem~\eqref{Decentralized}\xspace}
\newcommand{\PDd}{Problem~$(\text{L}:\mc {SC}(\bs \xi_i, \bs \zeta_{\Ni}), \mc F(\mb R^{N_{x,i}}))$\xspace}
\newcommand{\PDdCC}{Problem~$(\text{L}:\mc {SC}(\bs \xi_i, \bs \zeta_{\Ni}), \mc F_{\CC}(\mb R^{N_{x,i}}))$\xspace}
\newcommand{\PDdCCa}{Problem~$(\text{L}:\mc {SC}_a(\bs \xi_i, \bs \zeta_{\Ni}), \mc F_{\CC}(\mb R^{N_{x,i}}))$\xspace}
\newcommand{\PDdAT}{Problem~\eqref{DecentralizedXab}\xspace}
\newcommand{\PDdATC}{Problem~\eqref{DecentralizedFinal}\xspace}


\title[Decentralized decision making for networks of uncertain systems]{Decentralized decision making for networks of uncertain systems}

\author{Georgios Darivianakis, Angelos Georghiou and John Lygeros
	}%
	\thanks{This research project is financially supported by the Swiss Innovation Agency Innosuisse and is part of the Swiss Competence Center for Energy Research SCCER FEEB\&D.} 
	\thanks{The authors are with the Automatic Control Laboratory, Department of Electrical Engineering and Information Technology, ETH Zurich, CH-8092 Zurich, Switzerland, and also with the Desautels Faculty of Management, McGill University, Montreal, QC H3A 1G5, Canada (e-mail: gdarivia@control.ee.ethz.ch; angelos.georghiou@mcgill.ca; jlygeros@control.ee.ethz.ch).}
	
\pagestyle{fancy}
\fancyhf{}      
\fancyhead[LE,RO]{\footnotesize\thepage}
\fancyhead[LO,RE]{\footnotesize \textbf{G. Darivianakis, A. Georghiou and J. Lygeros:} \textit{Decentralized decision making for networks of uncertain systems}\\ Article submitted to ...}

\begin{document} 

\begin{abstract}
Distributed model predictive control (MPC) has been proven a successful method in regulating the operation of large-scale networks of constrained dynamical systems. This paper is concerned with cooperative distributed MPC in which the decision actions of the systems are usually derived by the solution of a system-wide optimization problem. However, formulating and solving such large-scale optimization problems is often a hard task which requires extensive information communication among the individual systems and fails to address privacy concerns in the network. Hence, the main challenge is to design decision policies with a prescribed structure so that the resulting system-wide optimization problem to admit a loosely coupled structure and be amendable to distributed computation algorithms. In this paper, we propose a decentralized problem synthesis scheme which only requires each system to communicate sets which bound its states evolution to neighboring systems. The proposed method alleviates concerns on privacy since this limited communication scheme does not reveal the exact characteristics of the dynamics within each system. In addition, it enables a distributed computation of the solution, making our method highly scalable. We demonstrate in a number of numerical studies, inspired by engineering and finance, the efficacy of the proposed approach which leads to solutions that closely approximate those obtained by the centralized formulation only at a fraction of the computational effort.
\end{abstract}

\maketitle

\section{Introduction}

Operation of large-scale networks of interacting dynamical systems remains an active field of research due to its high impact on real-world applications, e.g., regulation of power networks \cite{Venkat2008} and energy management of building districts \cite{Darivianakis2016}. For system of this scale, the design and deployment of a centralized controller is often difficult due to computation and communication limitations, and also prohibitive in cases the individual systems need to retain a certain degree of privacy. In such cases, it is desirable the design of interacting local controllers which rely only on local computational resources and a distributed communication network with a prescribed structure.

The problem of synthesizing optimal distributed controllers based on an arbitrary communication structure typically amounts to an infinite-dimensional, non-convex optimization problem and is a known NP-hard \cite{Tsitsiklis1985}. For that reason, several studies have been devoted to identifying communication structures under which the problem of designing optimal decentralized controllers can be simplified \cite{Lin2011,Mahajan2012}. For instance, if the communication network admits a partially nested structure \cite{Ho1972} then affine controllers are known to be optimal for decentralized linear systems with quadratic costs and additive Gaussian noise \cite{Ho1972,Rantzer2006,Rantzer2006a}. Similar results exist for communication structures that are spatially invariant \cite{Fardad2009,Bamieh2002,Motee2008}, introduce delays on information sharing \cite{Lamperski2015,Nayyar2011,Nayyar2013}. Recent advances with convex optimization algorithms shifted research interest on identifying information structures that allow the optimal distributed controllers synthesis problem to be formulated as a convex optimization problem \cite{Bamieh2005,DeCastro2002,Dvijotham2013,Matni2013,Qi2004}. These structures usually possess properties as quadratic invariance \cite{Rotkowitz2006,Swigart2014} and funnel causality \cite{Bamieh2005} which essentially eliminate the incentive of signaling among the decentralized controllers. For general network structures, the usual practice is to resort to linear matrix inequality relaxations \cite{Langbort2004,Zecevic2010} or semidefinite programming relaxations \cite{Lavaei2012,Fazelnia2017} to obtain a suboptimal design with performance guarantees.

A downside of the aforementioned approaches is the inability of the resulting static controllers to cope with state and input constraints in the systems. MPC is an optimization based methodology that is well-suited for regulating the operation of constrained systems \cite{Mayne2000}. In the MPC framework adopted here, distributed control schemes are usually categorized into cooperative or non-cooperative \cite{Scattolini2009}. Cooperative distributed MPC approaches require substantial communication infrastructure and computation resources since a system-wide MPC problem is formulated and solved \cite{Venkat2005,Stewart2010,Giselsson2014}. On the other hand, non-cooperative approaches, though computationally simple and effective in practice, can be conservative in presence of strong coupling \cite{Richards2004,Keviczky2006,Trodden2010}. Both cooperative and non-cooperative schemes typically require a centralized offline design phase. This requirement can be restrictive, making the distributed schemes suffer from similar complexity and privacy concerns as the centralized MPC formulation. To alleviate these issues, it is desirable to develop decentralized schemes that rely on local computational resources and information structure. In the literature, this is commonly achieved by each system considering the worst-case effect of its neighbors as a bounded exogenous disturbance to its own system \cite{Camponogara2002,Dunbar2007,Farina2012,Lucia2015,Trodden2017}. Nevertheless, this can be conservative approach if the sets of bounded exogenous disturbances are calculated offline; therefore, disregarding the possibility of adapting their size based on the dynamical evolution of the system.

In this paper, we consider the problem of designing optimal decentralized cooperative MPC controllers for linear time varying interconnected systems. We assume that these distributed systems are coupled through their dynamics and/or constraints and are subject to possibly correlated exogenous disturbances which are assumed to have known and bounded support. Unless a nested information structure is imposed, as suggested in \cite{Lin2016}, the aforementioned synthesis problem is computationally intractable. In this paper, however, we are interested on minimum information structures which only require communication among neighboring systems and hence do not necessarily admit any nestedness. In particular, each system is only required to transmit to its direct neighbors a set that bounds these of its predicted states which affect their constraints and/or dynamics. Due to this minimal communication structure, the proposed decentralized scheme allows the decoupling of the optimization problems of the individual systems in the network. In this setting, however, we abandon the search for optimal decentralized control policies and resort, instead, to approximation of the original non-convex, infinite dimensional problem. We relax NP-hardness of the original formulation by restricting ourselves on communicated sets that result as the scaling and translation of a predefined convex conic set. The proposed method scales polynomially with respect to the number of agents and the prediction horizon length. The polynomial scalability is achieved by reformulating the original problem into a convex infinite dimensional optimization problem, and then approximating it using decision rules \cite{bental2004ars}. The resulting problem retains its decoupled structure making it amendable to distributed computations algorithms such as the alternating direction method of multipliers \cite{Boyd2011}. The proposed method partly alleviates concerns on privacy by not revealing sensitive information regarding the operational characteristics of the individual agents. The importance of adaptive set communication as a method for decentralized cooperative MPC has also been investigated in recent works (e.g., \cite{Farina2012, Lucia2015, Trodden2017}). The key difference of our approach is that the size of these communicated sets is adapted online being a decision variable in the resulting optimization problem. A proof-of-concept study for the problem of the efficient energy management of building districts was presented in \cite{DarivianakisDec}. This paper considerably extends the content of our preliminary paper by providing a mathematically rigorous presentation of the proposed method and by investigating its merits and demerits through a number of illustrative examples inspired from engineering and finance.

The remainder of this paper is organized as follows. Section \ref{sec::ProbForm} provides the problem formulation and briefly reviews available methods for the design of optimal decision policies with centralized and nested information structures. The main contributions are presented in Sections \ref{sec::DecCont} and \ref{sec::SolMethod}, where the proposed approach is developed and the techniques associated with the derivation of a tractable approximation to the original non-convex infinite-dimensional problem are discussed. Section \ref{sec::Numerics} provides numerical studies to demonstrate the efficacy and scalability of the proposed method. Section \ref{sec::Conclusion} closes the paper with conclusions and possible directions for future work. Proofs of the propositions and theorems are found in the Appendix.

\textbf{Notation:} For given vectors $ v_{i} \in \mb R^{k_i} $ with $ k_i \in \mb N $, $ i \in \mc M = \{1,\ldots,m\} $, we define $ v_\Mu = [v_{i}]_{i\in \mc M} = [v_{1}^\top \ldots v_{m}^\top]^\top \in \mb R^{k} $ with $ k = \sum_{i=1}^{m}k_i $ as their vector concatenation. Concatenated vectors are represented in boldface. Dimensions of matrices and concatenated vectors are assumed clear from the context. We denote by $ t $ the time instant of a horizon $ T \in \mb N $, and we define the sets $ \mc T = \{1,\ldots, T\} $ and $ \mc T_+ = \mc T \cup \{T+1\} $. Given time dependent vectors $ \nu_{i,t} \in \mb R^{\ell_i} $ with $ i \in \mc M $, $ t \in \mc T $ and $ \ell_i \in \mb N $, we define $ \bs \nu_{\Mu,t} = [\nu_{i,t}]_{i \in \mc M} $ as the concatenated vector at time $t$, $ \bs \nu_i^t = [\nu_{i,1}^\top \ldots \nu_{i,t}^\top]^\top $ as the history of the $ i $-th vector up to time $ t $, and $ \bs \nu_\Mu^t = [\bs \nu_i^t]_{i \in \mc M} $ as the history of the concatenated vector up to time $ t $.

\section{Problem formulation} \label{sec::ProbForm}

We consider a physical network comprising $ M $ interconnected systems, henceforth referred to as agents. We assume that the agents are coupled among themselves through the dynamics. We describe these interactions through a directed graph in which an arc connecting agent $ j $ to agent $ i $, with $ i, j\in \mc M = \{1,\ldots,M\} $, indicates that the states of the $ j $-th agent affect the dynamics of the $ i $-th agent. We refer agent $ j $ as the preceding neighbor to agent $ i $, henceforth neighbor, and we define the set $ \mc N_i \subset \mc M $ to include all the neighbors of the $ i $-th agent. Fig. \ref{fig::physNet} illustrates a system of $ M = 5 $ agents where the neighbors of agent 3 are given by $ \mc N_3 = \{2, 5\}$. In the sequel, we refer to the {physical network} depicted in Fig. \ref{fig::physNet} as the ``working example'' and  use it to streamline the presentation of key ideas in the paper. 
\begin{figure}[h]
	\centering
	\includegraphics[width = 0.4\textwidth]{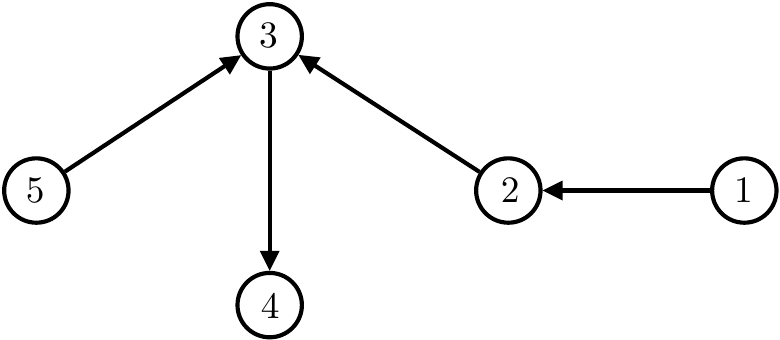}
	\caption{\textbf{Working example}: \emph{Physical coupling graph of 5 agents}. Solid line arrows represented the direction of the interaction.}
	\label{fig::physNet}
\end{figure}

\subsection{System dynamics, constraints and objective function} 

In this paper, we study finite horizon problems with $T$ stages. We use linear dynamics to model the state evolution of the agent $i$ at time instant $ t \in \mc T $, as 
\begin{equation} \label{eq::stateDynamics}
x_{i,t+1} = A_{i,t} x_{i,t} + B_{i,t} \bs x_{\Ni,t} +D_{i,t} u_{i,t} + E_{i,t}\xi_{i,t}. 
\end{equation} 
Here $ x_{i,t} \in \mb R^{n_{x,i}} $ denotes the states, with the initial state $ x_{i,1} $ known. The interaction of agent $i$ and its neighbors is captured through the term $ B_{i,t} \bs x_{\Ni,t}$.
The vector $ u_{i,t} \in \mb R^{n_{u,i}} $ models the inputs and $ \xi_{i,t} \in \mb R^{n_{\xi,i}} $ captures the exogenous disturbances affecting the system dynamics. The time-varying system matrices $ A_{i,t} $, $ B_{i,t} $, $D_{i,t} $ and $ E_{i,t} $ are assumed known with proper dimensions and of full column rank. To economize on notation, we now compactly rewrite \eqref{eq::stateDynamics} as 
\begin{equation*}\label{eq::stateDynamicsCompact}
\bs x_i = f_i(\bs x_{\Ni}, \bs u_i, \bs \xi_i) := A_i x_{i,1} + B_i \bs x_{\Ni} + D_i \bs u_i + E_i \bs \xi_i,
\end{equation*}
where $ \bs x_i = [x_{i,t}]_{t\in \mc T_+} $, $ \bs u_i = [u_{i,t}]_{t \in \mc T} $, $ \bs \xi_i = [\xi_{i,t}]_{t\in \mc T} $ and $ \bs x_{\Ni} = [\bs x_{\Ni,t}]_{t \in \mc T} $. The system matrices $ A_i $, $ B_i $, $ D_i $ and $ E_i $ used to define the function $ f_{i}(\cdot)  $ are directly constructed by the problem data given in \eqref{eq::stateDynamics} (see e.g., \cite{Goulart2006} for such a derivation). 
The  $ i $-th agent is subject to linear operational constraints
\begin{equation}\label{eq::InequalitiesCompact}
\mc O_i = \big\{(\bs x_{i}, \bs u_{i}) \,:\, H_{x,i}\bs{x}_i + H_{u,i} \bs u_i \leq h_i\big\},
\end{equation}
where the matrices $ H_{x,i} $, $ H_{u,i} $ and $ h_i $ are assumed known and of proper dimensions. Note that this compact constraint formulation allows the consideration of time-varying linear operational constraints with time-stage coupling. In addition, operational constraints involving neighboring states or exogenous disturbances can always be included by appropriately extending the state space of the $ i $-th system.

The $ i $-th agent's objective function is given as,
\begin{equation}\label{eq::objFnc}
J_i(\bs x_i,\bs u_i) =  \sum_{t=1}^T \left(\| Q_i  x_{i,t} \|_p + \| R_i  u_{i,t} \|_p\right), 
\end{equation}
where $ p \in \{\infty, 1, 2\} $ allows for different objective formulation. The penalization matrices $  Q_i $, $  R_i  $ are assumed known and of proper dimensions.

In the following, we assume that the exogenous disturbances affecting agent $ i $ reside in the nonempty, convex and compact polyhedral uncertainty set $\Xi_i = \{\bs \xi_i :  W\bs \xi_i \geq  w\}$ where matrix $W$ and  vector $w$ are known and of proper dimensions. We will be making the simplifying assumption that the joint uncertainty set of all agents in the system has a decoupled structure, i.e.,
$ \bs \xi_\Mu \in \Xi_\Mu = \bigtimes_{i \in \mc M} \Xi_i $, which  essentially precludes the existence of  coupled disturbances amongst the agents. This assumption can be  relaxed at the expense of further case distinctions in what follows.


\subsection{Designing decision policies with centralized information exchange}

A common assumption in designing decision policies is to assume that at time $t$, each agent has access to the states of all the other agents in the network up to and including period $t$ \cite{HGK:2011b,Goulart2006}. We will refer to this communication as the \emph{centralized information exchange}, depicted in Fig. \ref{fig::InfStr_CS} for the working example. In this context, we denote the \emph{causal state feedback} policies for agent $i$ at time $t\in\mc T$ as  \mbox{$\pi_{i,t}:\mb R^{n_x^t} \rightarrow \mb R^{n_{u,i}} $} where $ n_{x}^t = t \left(\sum_{j\in \mc M} n_{x,j} \right)$, such that its input at time $ t $ is given as $ u_{i,t} = \pi_{i,t}(\bs x_\Mu^t) $. We write $ \bs \pi_i(\bs x_\Mu)  = [\pi_{i,t}(\bs x_\Mu^t)]_{t\in\mc T} $ to denote the policy concatenation over the horizon, and we define as $ \mc C(\bs x_\Mu) $ the space of causal state feedback policies.

\begin{figure}[h]
	\centering
	\includegraphics[width = 0.4\textwidth]{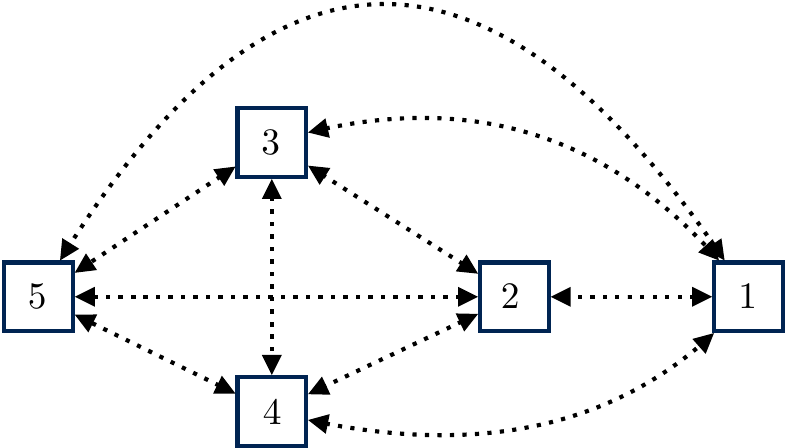}
	\caption{\textbf{Working example}: \emph{Centralized information exchange}. Dotted line arrows represent the  communication links between the agents.}
	\label{fig::InfStr_CS}
\end{figure}

The optimization problem for designing robust policies with centralized information exchange which minimize the sum of worst-case individual objectives is formulated as
\begin{equation}\label{Centralized}
\begin{array}{l}
\text{\;\;minimize } \displaystyle\sum\limits_{i = 1}^M \max\limits_{\bs \xi \in \Xi} J_i(\bs x_i,\bs u_i) \\
\left.\begin{array}{@{}r@{\;}l@{}}
\text{subject to }& \bs \pi_i(\cdot) \in \mc C(\bs x_\Mu), \,
\bs u_i = \bs \pi_i(\bs x_\Mu)\\
& \bs x_{i} = f_{i}\big(\bs x_{\Ni}, \bs u_{i}, \bs \xi_{i}\big)\\
& (\bs x_{i}, \bs u_{i}) \in \mc O_i
\end{array}\right \rbrace \forall \bs \xi_\Mu \in \Xi_\Mu,\; \forall i \in \mc M.
\end{array}
\tag{$\text{C}:\mc C(\bs x_\Mu)$}
\end{equation}
The optimization variables  are $ \bs \pi_i(\cdot) $ for all $ i \in \mc M $. As shown in \cite{HGK:2011b,Goulart2006}, the state feedback structure of the decision variables induces a non-convex feasible region. To deal with this, they propose the design of \emph{strictly causal disturbance feedback policies} $ \Pi_{i,t}:\mb R^{n_{\xi}^t} \rightarrow \mb R^{n_{u,i}} $ where $ n_{\xi}^t = (t-1)\sum_{i\in \mc M} n_{\xi,i} $, such that the input at each time step is given by $ u_{i,t} = \Pi_{i,t}(\bs \xi_\Mu^{t-1}) $. We write $ \bs \Pi_i(\bs \xi_\Mu)  = [\Pi_{i,t}([\bs \xi_\Mu^{t-1}]_{t\in\mc T}) $ to denote the policy concatenation over the horizon, and we define as $ \mc {SC}(\bs \xi_\Mu) $ the space  of strictly causal disturbance feedback policies.
This formulation leads to the infinite dimensional linear optimization \PCd. Using the fact that matrices $A_{i,t}$, $B_{i,t}$ and $E_{i,t}$ are full column rank, they show that there is an   one-to-one relationship between state and disturbance feedback policies in terms of feasibility and optimality. Restricting the admissible policies to have an affine structure reduces the problem to finite-dimensional linear optimization problem which can be solved with off-the-shelf optimization solvers \cite{Goulart2006}. 
Furthermore, due to the one-to-one relationship of state and feedback policies, there is also a unique mapping that translates the resulting affine disturbance  policies to an equivalent affine state feedback policy which  allows  to be implemented locally by each agent using the centralized communication network. 

Although theoretically appealing, decision rules based on centralized information exchange are hard to design and implemented in practice for large-scale systems. This is partially the case since for large networks $ (i) $ solving the resulting  linear optimization problem from the affine approximation  can be computationally challenging due to its monolithic structure; while $ (ii) $ the excessive  communication and the centralized physical network required to allow each agent to evaluate its policy, does not promote privacy since the exact policy/constraints of individual agents are eventually revealed to the rest of the network. We will demonstrate the former through numerical experiments in Section \ref{sec::Numerics}.

\subsection{Designing policies with partially nested information exchange}

In an attempt to address the computational and privacy issues, researchers have proposed a number of policy designs that consider only partial communication among the agents. For an arbitrary information exchange network the design phase typically results in a non-convex problem which is computationally intractable. A notable exception is the work of \cite{Lin2016} which assumes a \emph{partially nested information exchange}, leading to  convex formulations. Roughly speaking this communication exchange implies that  agent $ i $ has access to information coming from all of its \emph{precedent agents} in a non-anticipative manner. In this setting, agent $ j $ is named a\emph{ precedent to agent $ i $}, if input at system $ j $ at time $ t' $ can affect the local information available to agent $ i $ at some time $ t > t' $ in the future \cite[Definition 1]{Lin2016}. The partially nested information exchange associated with the working example is depicted in Fig. \ref{fig::InfStr_NS}.

In a partially nested communication, the policy is designed as follows. We denote by $ \ol{\mc N}_i \subseteq \mc M $ the set that includes agent $ i $ and all its precedent agents. At time $t$,  the $ i $-th agent measures  its own states and the states of its precedent agents, denoted by  $ \bs x_{\oNi,t} = [x_{j,t}]_{j \in \ol{\mc N}_i} $. Using all measurements from stage 1 up to time $t$, it designs a \emph{causal partial state feedback} policy $ \phi_{i,t}:\mb R^{\ol{n}_{x,i}^t} \rightarrow \mb R^{n_{u,i}} $, where $ \ol{n}_{x,i}^t = t \left(\sum_{j\in \ol{\mc N}_i} n_x^j\right) $. The input is now given as $ u_{i,t} = \phi_{i,t}(\bs x_\oNi^t) $. We write $ \bs \phi_i(\bs x_\oNi)  = [\phi_{i,t}(\bs x_\oNi^t)]_{t\in\mc T} $ to denote the policy concatenation over the time horizon, and we define  $ \mc C(\bs x_\oNi) $ the space  of causal state feedback policies associated with agent $i$. The nested information structure associated with the working example is depicted in Fig. \ref{fig::InfStr_NS}.
\begin{figure}[h]
	\centering
	\includegraphics[width = 0.4\textwidth]{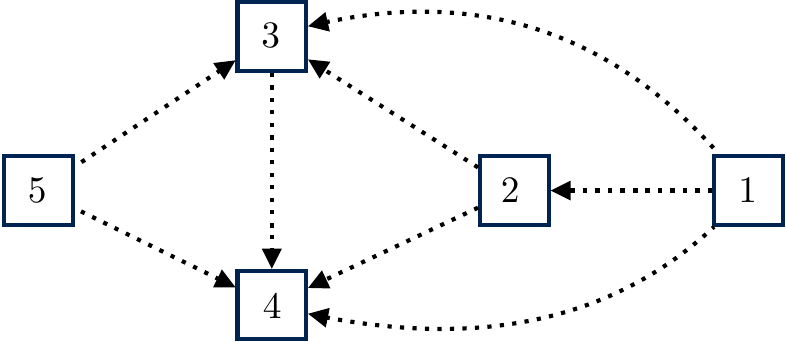}
	\caption{\textbf{Working example}: \emph{Partially nested information exchange}. Dotted line arrows represent the established communication links between local controllers.}
	\label{fig::InfStr_NS}
\end{figure}

The optimization problem to design robust  policies with partially nested information exchange which minimize the sum of worst-case individual objectives is formulated as
\begin{equation}\label{Semi-Centralized}
\begin{array}{l}
\text{\;\;minimize } \displaystyle\sum\limits_{i = 1}^M \max\limits_{\bs \xi_\oNi \in \Xi_\oNi} J_i(\bs x_i,\bs u_i) \\
\left.\begin{array}{@{}r@{\;}l@{}}
\text{subject to }& \bs \phi_{i}(\cdot) \in \mc C(\bs x_\oNi),\;\bs u_i = \bs \phi_{i}(\bs x_\oNi)\\
& \bs x_{i} = f_{i}\big(\bs x_{\Ni}, \bs u_{i}, \bs \xi_{i}\big)\\
& (\bs x_{i}, \bs u_{i}) \in \mc O_i
\end{array}\right \rbrace \forall \bs \xi_\oNi \in \Xi_{\oNi},\;
\forall i \in \mc M,
\end{array}
\tag{$\text{PN}:\mc C(\bs x_\oNi)$}
\end{equation}
where $ \Xi_{\oNi} = \bigtimes_{j \in \ol{\mc N}_i} \Xi_j $. The decision variables are $ \bs \phi_{i}(\cdot) $ for all $ i \in \mc M $. \PSCs is typically non-convex due to the state feedback structure of the policies. Similar to the centralized case, \cite{Lin2016} proposes the use of \emph{strictly causal partial nested disturbance feedback policies} $ \Phi_{i,t}:\mb R^{\ol{n}_{\xi,i}^t} \rightarrow \mb R^{n_{u,i}} $ where $ \ol{n}_{\xi,i}^t = (t-1) \left( \sum_{j\in \ol{\mc N}_i} n_\xi^j\right) $, such that the input at each time step is given by $ u_{i,t} = \Phi_{i,t}(\bs \xi_\oNi^{t-1}) $.
We write $ \bs \Phi_i(\bs \xi_{\oNi})  = [\Phi_{i,t}(\bs \xi_\oNi^{t-1})]_{t\in\mc T} $ to denote the policy concatenation, and we define as $ \mc {SC}(\bs \xi_\oNi) $ the space  of strictly causal disturbance feedback policies, leading to the infinite dimensional linear optimization \PSCd.
If these disturbance feedback policies are restricted to admit an affine structure then \PSCd becomes a finite-dimensional linear optimization problem. As with the centralized case, there is an one-to-one relationship between the state and disturbance feedback policies, both for the infinite dimensional and affine restriction. This  allows the agents in the network to evaluate their policies based on the established  partially nested communication.

The following theorem establishes the connection between centralized and partially nested information design problems, and will be used in the following section  to demonstrate the relationship between the proposed approach to the centralized and partially nested policy structures.
\begin{theorem}\label{thm::1}
	\PSCd is a conservative approximation of \PCd in the following sense: every feasible solution of \PSCd is feasible in \PCd, and the optimal value of \PSCd is  larger or equal to the optimal value of \PCd.
\end{theorem}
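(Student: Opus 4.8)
The plan is to exhibit \PSCd as a \emph{restriction} of \PCd obtained by forcing each agent's policy to ignore the disturbances of all non-precedent agents, and then to show that this restriction alters neither the constraints nor the objective value attained at a feasible point. Since $\ol{\mc N}_i \subseteq \mc M$, the history $\bs \xi_\oNi^{t-1}$ is a subvector of $\bs \xi_\Mu^{t-1}$, so every strictly causal partially nested policy $\bs \Phi_i \in \mc{SC}(\bs \xi_\oNi)$ extends to a strictly causal centralized policy by setting $\bs \Pi_i(\bs \xi_\Mu) := \bs \Phi_i(\bs \xi_\oNi)$, i.e.\ a policy that simply does not depend on $\{\bs \xi_j : j \notin \ol{\mc N}_i\}$; causality is preserved because the extra arguments are dropped, never anticipated. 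First I would fix an arbitrary feasible point $\{\bs \Phi_i\}_{i\in\mc M}$ of \PSCd and carry it to the candidate $\{\bs \Pi_i\}_{i\in\mc M}$ for \PCd through this embedding.

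The crux — and the step I expect to be the main obstacle — is a \emph{dependence lemma}: under the embedded policies, the closed-loop trajectory $\bs x_i$ and input $\bs u_i$ of agent $i$ are functions of $\bs \xi_\oNi$ alone. Two facts drive this. First, precedence is transitive, so $j \in \ol{\mc N}_i$ implies $\ol{\mc N}_j \subseteq \ol{\mc N}_i$; hence every agent that feeds into agent $i$'s dynamics uses only disturbances already contained in $\bs \xi_\oNi$, and so do its own inputs. Second, the dynamics \eqref{eq::stateDynamics} are strictly time-causal, with $x_{i,t+1}$ depending on neighbor states at time $t$ only, so the closed loop is well defined even when the physical graph contains cycles, and $x_{i,t}$ can be expressed by unrolling the recursion in terms of $\{\xi_{j,s}: j\in\ol{\mc N}_i,\, s<t\}$ and the inputs of agents in $\ol{\mc N}_i$, which by transitivity again depend only on $\bs \xi_\oNi$. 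I would make this precise by induction on $t$ (with an inner induction along directed paths into $i$), invoking the definition of precedent agents \cite[Definition 1]{Lin2016} to guarantee that no disturbance outside $\ol{\mc N}_i$ can reach agent $i$.

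With the dependence lemma in hand, feasibility transfers immediately. For the embedded solution the operational constraint $(\bs x_i, \bs u_i)\in\mc O_i$ and the dynamic relation $\bs x_i = f_i(\bs x_{\Ni}, \bs u_i, \bs \xi_i)$ involve only quantities that are functions of $\bs \xi_\oNi$. Because $\Xi_\Mu = \bigtimes_{i\in\mc M}\Xi_i$ is a product and $\Xi_\oNi$ is exactly its projection onto the precedent coordinates, the quantifiers ``for all $\bs \xi_\oNi \in \Xi_\oNi$'' and ``for all $\bs \xi_\Mu \in \Xi_\Mu$'' impose the same requirement on these quantities; thus the constraints of \PCd hold for $\{\bs \Pi_i\}$, which proves the first claim that every feasible point of \PSCd is feasible in \PCd.

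Finally I would match the objectives. Since $J_i(\bs x_i, \bs u_i)$ depends on $\bs \xi_\Mu$ only through $\bs \xi_\oNi$ for the embedded solution, the product structure of $\Xi_\Mu$ gives
\[
\max_{\bs \xi_\Mu \in \Xi_\Mu} J_i(\bs x_i, \bs u_i) \;=\; \max_{\bs \xi_\oNi \in \Xi_\oNi} J_i(\bs x_i, \bs u_i).
\]
Summing over $i\in\mc M$ shows that the centralized objective at $\{\bs \Pi_i\}$ equals the partially nested objective at $\{\bs \Phi_i\}$. As $\{\bs \Pi_i\}$ is feasible for \PCd and \PCd minimizes over a policy class containing all such embeddings, its optimal value is no larger than that of \PSCd, yielding the second claim. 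The only places demanding genuine care are the well-definedness of the closed loop under cyclic coupling and the bookkeeping of which disturbance coordinates survive, both of which reduce to the transitivity of precedence and the time-causality of \eqref{eq::stateDynamics}.
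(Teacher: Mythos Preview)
Your proposal is correct and follows essentially the same route as the paper: both hinge on the dependence lemma (the paper's equation \eqref{eq::thm1::eq1}) that, under a partially nested policy, agent $i$'s state and input depend only on $\bs \xi_\oNi$, proved via the nesting property $\ol{\mc N}_j \subseteq \ol{\mc N}_i$ for $j\in\mc N_i$, and both then use the product structure of $\Xi_\Mu$ together with the inclusion $\mc{SC}(\bs \xi_\oNi)\subseteq\mc{SC}(\bs \xi_\Mu)$ to conclude. The only cosmetic difference is that the paper routes the argument through an intermediate problem with centralized quantifiers but nested policy class, whereas you embed directly into \PCd; the substance is the same.
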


Partially nested information exchange slightly reduces the communication requirements compared to the centralized problem (see Figs. \ref{fig::InfStr_CS} and \ref{fig::InfStr_NS} of the working example). This has a positive impact on the solution time needed to design affine feedback policies, however, the resulting linear program inherits in large part the monolithic structure of the centralized problem due to absence of a  non-sparse structure. Most importantly, even in the simple model of our working example, the partially nested communication requires three additional links compared  to the physical links. In some cases, the minimum number of communications links needed to ensure a partially nested information coincides with the centralized information exchange, see the examples presented in Section \ref{sec::Numerics}.
Therefore, the synthesis of policies with a partially nested information  inherit, in large extend, the drawbacks of the centralized problem, both from a computational and privacy standpoint.

\section{Designing decision policies with local information exchange} \label{sec::DecCont}
In this paper we propose a decentralized policy structure that relies on local information exchange among the agents. The proposed policy aims to address both the computational and privacy concerns discussed so far. While in the previous section the information flow had to be sufficiently complex as to capture a partially nested structure, in this section we will assume that the information flow can be as simple as the physical network, as this is illustrated in Fig. \ref{fig::InfStr_DS} for the working example. 
\begin{figure}[h]
	\centering
	\includegraphics[width = 0.4\textwidth]{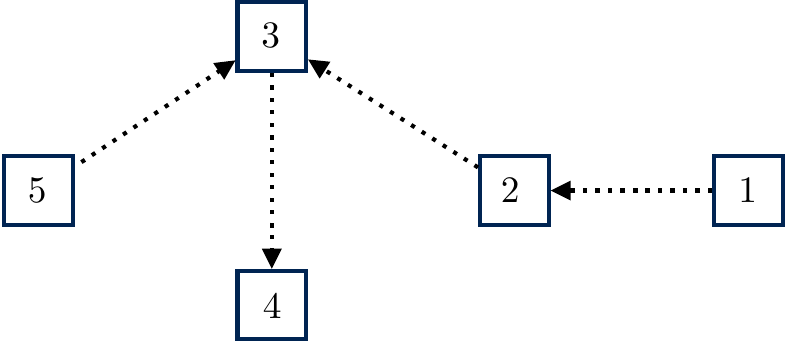}
	\caption{\textbf{Working example}: \emph{Local information exchange}. Dotted line arrows represent the established communication links between agents. }
	\label{fig::InfStr_DS}
\end{figure}

In contrast to the previous section where agent  $ j\in\mc{N}_i $ communicates to agent $i$ explicitly the functional form of its  states, in the proposed framework agent  $ j$  communicates a compact set $ \mc X_j $, henceforth referred as the \emph{state forecast set}, that contains possible evolution of its states, i.e., $\bs x_j \in \mc X_j$. In this framework, the shape of $ \mc X_j $ is a decision quantity for each agent $ j $. Upon receiving these state forecast sets from all of its neighbors, agent $ i $ treats these states as uncertain quantities that affect its dynamics in a similar way as exogenous disturbances. To emphasize this, we denote by $ \zeta_{j,t}\in\mathbb{R}^{n_{x,j}} $ the belief of agent $ i $ about the states of agent $j$ at time $t$, such that $\bs\zeta_j=[\zeta_{j,t}]_{t\in\mc{T}_+} \in \mc{X}_j$. In this context, the policy of agent $i$ at time $t$ is based on the information from its own states $x_{i,t}$ and on the belief states $ \bs \zeta_{\Ni,t} $, from stage 1 up to $t$. This leads us to design \emph{causal local state/disturbance feedback policies} $ \psi_{i,t}:\mb R^{\hat n_{x,i}^t} \rightarrow \mb R^{n_u^i} $ where $ \hat {n}_{x,i}^t = t\left(n_{x,i} + \sum_{j\in \mc N_i} n_{x,j}\right) $, such that the  input of agent $i$ at time $ t $ is given by $ u_{i,t} = \psi_{i,t}(\bs x_i^t, \bs \zeta_\Ni^t) $. We denote by $ \bs {\psi}_{i}(\bs x_i, \bs \zeta_{\Ni}) = [\psi_{i,t}(\bs x_i^t, \bs \zeta_\Ni^t)]_{t \in \mc T} $ the policy concatenation over the time horizon, and by $ \mc C(\bs x_i, \bs \zeta_{\Ni}) $ the corresponding space of causal local state/disturbance feedback policies associate with agent $i$.

In this decentralized setting, the robust optimization problem to design policies with local information exchange which minimize the sum of worst-case individual objectives is formulated as
\begin{equation}\label{Decentralized}
\begin{array}{l}
\text{\;\;minimize } \displaystyle\sum\limits_{i = 1}^M \max\limits_{\bs \xi_i \in \Xi_i, \bs \zeta_{\Ni} \in \mc X_{\Ni}} J_i(\bs x_i,\bs u_i) \\
\left.\begin{array}{@{}r@{\;}l@{}}
\text{subject to }& \bs {\psi}_{i}(\cdot) \in \mc C(\bs x_i, \bs \zeta_{\Ni}),\, \bs u_i = \bs {\psi}_{i}(\bs x_i, \bs \zeta_{\Ni})\\
& \bs x_{i} = f_{i}\big(\bs \zeta_{\Ni}, \bs u_{i}, \bs \xi_{i}\big)\\
& (\bs x_{i}, \bs u_{i}) \in \mc O_i \\
& \bs x_i \in \mc X_i, \; \mc X_i \in \mc{F}(\mb R^{N_{x,i}})
\end{array}\right \rbrace \begin{array}{@{}l}
\forall \bs \zeta_{\Ni} \in \mc X_{\Ni}\\
\forall \bs \xi_i \in \Xi_i
\end{array}\;\forall i \in \mc M,
\end{array}
\tag{$\text{L}:\mc C(\bs x_i, \bs \zeta_{\Ni}), \mc{F}(\mb R^{N_{x,i}})$}
\end{equation}
where $ \mc X_{\Ni} = \bigtimes_{j \in \mc N_i} \mc X_j $, and $\mc F(\mb R^{N_{x,i}}) $, where $ N_{x,i} = (T+1)n_{x,i} $, denotes the field of sets generated by all the compact subsets of the power set of $ \mb R^{N_{x,i}} $. The optimization variables  are $ \bs {\psi}_{i}(\cdot) $ and $ \mc X_i $ for all $ i \in \mc M $.  Since agent $i$ treads the beliefs $\bs \zeta_{\Ni}$ as exogenous disturbances,  its decisions are taken in view of the worst-case both with respect to  $\bs \xi_i \in \Xi_i$  and $\bs \zeta_{\Ni} \in \mc X_{\Ni}$. This is also reflected in the construction of  the objective function. Notice that agent $i$ is not directly affected by the disturbances $\bs \xi_j \in \Xi_j$ of its neighbors. Rather, the effect of $\bs \xi_j \in \Xi_j$ of agent $j\in\mc N_i$ is been translated into  set $\mc X_j$, which in turn affects agent $i$.

\PDs can be interpreted as a method for finding a compromise between the agents as this is represented through sets $\mc X_i$. If we focus attention in two agents, agents $i$ and $j$ with $j\in \mc{N}_i$, on the one hand  agent $j$ will benefit the most if the set $\mc X_j$ is as large as possible. By doing that, set  $\mc X_j$  will not impose any additional constraints on its states, thus agent $j$ will individually achieve   the lowest objective value contribution. On the other  hand, agent $i$ will benefit the most if it receives from agent $j$ the smallest possible set $\mc X_j$ (preferable $\mc X_j$ is a  singleton) which will reduce the uncertainty on its dynamics, and will have a positive effect in achieving the lowest objective value contribution. Since the objective of \PDs is to minimize the equally weighted sum of individual worst-case costs, the resulting policy/set pair finds the trade-off among the agents and achieving the lowest, network wide, objective value, while achieving robustly  feasibility with respect to  $\bs{\xi}_\Mu \in \Xi_\Mu$.

\PDs addresses the privacy concerns in the following ways. First,  the local communication network ensures that the information exchange is the minimum among the agents. Second, focusing again on agents $i$ and $j$ with $j\in \mc{N}_i$,  agent $j$ does  not directly reveal the function form of its states to agent $i$, which is the case in both the centralized and partially nested information exchange. Rather,   the future state trajectories are ``masks" by  set $\mc X_j$, thus reducing exposure to  agent $i$ who might want to leverage on the knowledge gained about the  constraints and dynamics of agent $j$.  If additionally the set $\mc X_j$ has a simple structure, e.g., $\mc X_j$ is rectangular, agent $j$ will reveal  the bare minimum to achieve a compromise in cost in terms objective of \PDs.  This will be studied further in the next section. 
Notice that \PDs has  highly decoupled structure, with only sets $\mc X_j$ linking the agents in the system.

The state feedback policies $\mc C(\bs x_i, \bs \zeta_{\Ni})$ induces a non-convex  optimization problem. As in \cite{HGK:2011b,Lin2016},  we now  focus on purely disturbance feedback policies $ \Psi_{i,t}:\mb R^{\hat n_{\xi,i}^t} \rightarrow \mb R^{n_{u,i}} $ where $ \hat n_{\xi,i}^t = (t-1)n_{\xi,i} + t\sum_{j\in \mc N_i} n_{x,j} $, such that the input at time $t$ is $ u_{i,t} = \Psi_{i,t}(\bs \xi_i^{t-1},\bs \zeta_\Ni^{t}) $. We write $ \bs \Psi_i(\bs \xi_i, \bs \zeta_{\Ni})  = [\Psi_{i,t}(\bs \xi_i^{t-1},\bs \zeta_\Ni^{t}]_{t\in\mc T} $ to denote the policy concatenation over time, and we define as $ \mc {SC}(\bs \xi_i, \bs \zeta_{\Ni}) $ the space of strictly causal disturbance feedback policies. Note that  the ``strictly causal" refers to the uncertain vector $\bs \xi_{i}^{t-1}$ which the policy is allowed to depend up to stage $t-1$, while the policy is ``causal" in state beliefs $ \bs \zeta_\Ni^{t} $ and is allowed to depend up to stage $t$.  The following theorem, establishes the equivalence between the proposed state/disturbance and disturbance feedback policies.
\begin{theorem}\label{thm::2}
	\PDs is a equivalent to \PDd in the following sense: Given a feasible state/disturbance feedback policy $ \bs \psi_{i}(\cdot) $ for \PDs, a feasible disturbance feedback policy $ \bs \Psi_{i}(\cdot) $ for \PDd can be constructed that achieves the same objective value, and vice versa. 
\end{theorem}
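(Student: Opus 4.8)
The plan is to prove the equivalence agent by agent, by exhibiting for each $i \in \mc M$ a bijection between the policy class $\mc C(\bs x_i, \bs \zeta_{\Ni})$ and the class $\mc {SC}(\bs \xi_i, \bs \zeta_{\Ni})$ that preserves, for every realization $(\bs \xi_i, \bs \zeta_{\Ni}) \in \Xi_i \times \mc X_{\Ni}$, the entire closed-loop trajectory $(\bs x_i, \bs u_i)$. The two problems share the same linear dynamics $\bs x_i = f_i(\bs \zeta_{\Ni}, \bs u_i, \bs \xi_i)$, the same operational constraints $\mc O_i$, the same forecast-set membership $\bs x_i \in \mc X_i$, and the same sets $\mc X_i$ as decision variables; moreover $J_i$, the constraint $\mc O_i$ and the membership $\bs x_i \in \mc X_i$ depend on the policy only through the pair $(\bs x_i, \bs u_i)$. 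Hence, if the trajectory is preserved realization-wise, then robust feasibility of the ``for all $\bs \xi_i \in \Xi_i$, $\bs \zeta_{\Ni} \in \mc X_{\Ni}$'' constraints, the inner worst-case maximization over the same set $\Xi_i \times \mc X_{\Ni}$, and the coupling through the unchanged $\mc X_i$ all transfer verbatim, and the two objective values coincide. The construction is the decentralized analogue of the state/disturbance feedback equivalence in \cite{Goulart2006, Lin2016}, the only new element being that the belief $\bs \zeta_{\Ni}$ enters $f_i$ linearly and is therefore treated exactly as an additional disturbance.

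For the forward direction (state/disturbance feedback to disturbance feedback) I would argue by induction on $t$ via recursive substitution. Since $x_{i,1}$ is known, $\psi_{i,1}(x_{i,1}, \zeta_{\Ni,1})$ already has the form $\Psi_{i,1}(\zeta_{\Ni,1})$. Assuming $u_{i,1}, \ldots, u_{i,t-1}$ have been written as functions of $(\bs \xi_i^{t-2}, \bs \zeta_{\Ni}^{t-1})$, unrolling \eqref{eq::stateDynamics} shows that every $x_{i,s}$ with $s \le t$ is an affine function of $(\bs \xi_i^{t-1}, \bs \zeta_{\Ni}^{t-1})$ and the known $x_{i,1}$; substituting these expressions into $\psi_{i,t}(\bs x_i^t, \bs \zeta_{\Ni}^t)$ yields a map of the form $\Psi_{i,t}(\bs \xi_i^{t-1}, \bs \zeta_{\Ni}^t)$, which respects the required ``strictly causal in $\bs \xi_i$, causal in $\bs \zeta_{\Ni}$'' structure. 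By construction the inputs, and hence the states, are identical to those generated by $\bs \psi_i$, so the trajectory is preserved.

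The reverse direction (disturbance feedback to state/disturbance feedback) is the main obstacle, since it requires inverting the dynamics. Here I would exploit the full column rank of $E_{i,t}$: fixing a left inverse $E_{i,t}^\dagger$ with $E_{i,t}^\dagger E_{i,t} = I$, the dynamics \eqref{eq::stateDynamics} give $\xi_{i,t} = E_{i,t}^\dagger\big(x_{i,t+1} - A_{i,t} x_{i,t} - B_{i,t} \bs \zeta_{\Ni,t} - D_{i,t} u_{i,t}\big)$, so along the closed loop the realized disturbance is recovered \emph{exactly} from the measured states, beliefs and inputs. The state policy is then built recursively: set $\psi_{i,1} := \Psi_{i,1}(\zeta_{\Ni,1})$; having defined $u_{i,1},\ldots,u_{i,t-1}$, use the inversion formula to express $\bs \xi_i^{t-1}$ as a function of $(\bs x_i^t, \bs \zeta_{\Ni}^{t-1})$ and define $\psi_{i,t}(\bs x_i^t, \bs \zeta_{\Ni}^t) := \Psi_{i,t}(\bs \xi_i^{t-1}, \bs \zeta_{\Ni}^t)$, which lies in $\mc C(\bs x_i, \bs \zeta_{\Ni})$. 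The crux is verifying that the $\bs \xi_i^{t-1}$ reconstructed inside the closed loop coincides with the true disturbance realization, which is precisely what the full-column-rank left inverse guarantees; the trajectories generated by $\bs \Psi_i$ and by the constructed $\bs \psi_i$ then agree for every $(\bs \xi_i, \bs \zeta_{\Ni})$. Combining the two directions with the trajectory-preservation argument of the first paragraph establishes the claimed equivalence; the only delicate point is the index bookkeeping ensuring that $\bs \Psi_i$ never depends on $\bs \xi_i$ beyond stage $t-1$.
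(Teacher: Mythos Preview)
Your proposal is correct and follows essentially the same route as the paper: induction on $t$, forward direction by unrolling \eqref{eq::stateDynamics} to express $\bs x_i^t$ as a function of $(\bs\xi_i^{t-1},\bs\zeta_{\Ni}^{t-1})$ and substituting into $\psi_{i,t}$, reverse direction by using the left inverse of the full-column-rank $E_{i,t}$ to recover $\xi_{i,t-1}$ from $(x_{i,t},x_{i,t-1},\bs\zeta_{\Ni,t-1},u_{i,t-1})$ and substituting into $\Psi_{i,t}$, with feasibility and equal objective value following from pointwise trajectory preservation. Your explicit remark that the forecast sets $\mc X_i$ are shared decision variables unchanged by the policy reparametrization is a point the paper leaves implicit.
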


The key difference between the partially nested information \PSCs and the local information \PDs, is that the synthesis phase of the latter requires that each agent communicates only with its direct neighbors rather than with all its precedent agents in the network (compare Fig. \ref{fig::InfStr_NS} to Fig. \ref{fig::InfStr_DS} for the working example). This minimum communication exchange is sufficient to address \PDs since the coupling among agents in the network only appears through sets $ \mc X_i $. This however introduces a level of conservativeness which is formalized in the following theorem. 

\begin{theorem}\label{thm::3}
	\PDd is a conservative approximation of \PSCd in the following sense: every feasible solution of \PDd is feasible in \PSCd, and the optimal value of \PDd is equal or larger than the optimal value of \PSCd.
\end{theorem}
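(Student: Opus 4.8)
The plan is to establish both assertions at once via an explicit forward construction that turns a feasible point of \PDd into a feasible point of \PSCd with no larger objective. A feasible point of \PDd is a tuple of policies $\bs \Psi_i(\bs \xi_i, \bs \zeta_\Ni)$ \emph{together with} forecast sets $\mc X_i$, whereas a point of \PSCd is only a tuple of strictly causal partial-nested policies $\bs \Phi_i(\bs \xi_\oNi)$. The idea is to let every agent run its \PDd policy with the beliefs set equal to the \emph{actual} neighbour states; this produces a joint closed-loop realisation $\wh{\bs x}_\Mu(\bs \xi_\Mu)$, and I would then define $\bs \Phi_i(\bs \xi_\oNi) := \bs \Psi_i(\bs \xi_i, \wh{\bs x}_\Ni)$, thereby discarding the sets $\mc X_i$ (which play no role in \PSCd) but using them crucially in the analysis.

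The crux, and the step I expect to be the main obstacle, is the \textbf{forecast-set invariance claim}: for every $\bs \xi_\Mu \in \Xi_\Mu$ the closed-loop trajectory satisfies $\wh{\bs x}_i \in \mc X_i$ for all $i \in \mc M$. This is precisely what licenses applying agent $i$'s robust \PDd guarantees at the particular belief $\bs \zeta_\Ni = \wh{\bs x}_\Ni$. The difficulty is that the physical graph may contain cycles, so the implication ``$\wh{\bs x}_\Ni \in \mc X_\Ni \Rightarrow \wh{\bs x}_i \in \mc X_i$'' (which is just feasibility of agent $i$ in \PDd) cannot be unrolled along a topological order. I would resolve this using the \emph{strictly causal} (one-step-delay) structure: since $u_{i,t}=\Psi_{i,t}(\bs\xi_i^{t-1},\bs\zeta_\Ni^{t})$ and $x_{i,t+1}$ is assembled from stage-$t$ quantities, each stage-$t$ state depends on the beliefs only up to stage $t-1$. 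Consequently the belief-to-state map $\Gamma_{\bs\xi}$, sending a joint trajectory in $\bigtimes_j\mc X_j$ (read as beliefs) to the states it produces, is a self-map of $\bigtimes_j\mc X_j$ — this is exactly per-agent \PDd feasibility — whose unique fixed point is $\wh{\bs x}_\Mu$ and is reached in finitely many forward iterations: starting from any $\bs\zeta^{(0)}\in\bigtimes_j\mc X_j$ (nonempty by feasibility), the stage-$t$ slice of the $k$-th iterate $\Gamma_{\bs\xi}^{k}(\bs\zeta^{(0)})$ is already correct for every $k\ge t$, so after $T+1$ iterations the iterate equals $\wh{\bs x}_\Mu$ while never leaving $\bigtimes_j\mc X_j$. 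This yields $\wh{\bs x}_i\in\mc X_i$ with no convexity or continuity hypothesis on the $\mc X_i$.

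With the invariance claim established, the remaining verifications are routine. For \textbf{admissibility}, since $\mc N_i\subseteq\ol{\mc N}_i$ and precedence is transitive, $\ol{\mc N}_j\subseteq\ol{\mc N}_i$ for every $j\in\mc N_i$, so each $\wh{\bs x}_j$ is a function of $\bs\xi_\oNj$ with indices contained in $\ol{\mc N}_i$; the one-step delay further makes $\wh x_{j,t}$ depend on $\bs\xi_\oNj^{t-1}$ only, whence the composed $\Phi_{i,t}(\bs\xi_\oNi^{t-1})$ is a bona fide strictly causal partial-nested policy. For \textbf{feasibility}, substituting $\bs\zeta_\Ni=\wh{\bs x}_\Ni$ turns the \PDd dynamics $\bs x_i=f_i(\bs\zeta_\Ni,\bs u_i,\bs\xi_i)$ into the \PSCd dynamics $\bs x_i=f_i(\wh{\bs x}_\Ni,\bs u_i,\bs\xi_i)$, and because $\wh{\bs x}_\Ni\in\mc X_\Ni$ the operational constraints $(\bs x_i,\bs u_i)\in\mc O_i$, which hold robustly over all of $\mc X_\Ni$, hold at this realisation in particular. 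For the \textbf{objective}, for each fixed $\bs\xi_\oNi$ the pair $(\bs\xi_i,\wh{\bs x}_\Ni)$ is an admissible point of the inner maximisation of \PDd over $\Xi_i\times\mc X_\Ni$, so
\begin{equation*}
\max_{\bs \xi_\oNi \in \Xi_\oNi} J_i(\bs x_i,\bs u_i) \;\le\; \max_{\bs \xi_i \in \Xi_i,\, \bs \zeta_\Ni \in \mc X_\Ni} J_i(\bs x_i,\bs u_i),
\end{equation*}
and summing over $i\in\mc M$ shows the constructed \PSCd point costs no more than the given \PDd point. Taking the infimum over feasible \PDd points then gives both claims simultaneously.
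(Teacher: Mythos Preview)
Your proposal is correct and follows essentially the same route as the paper: construct the closed-loop trajectories $\widehat{\bs x}_i$ by feeding the actual neighbour states into $\bs\Psi_i$, define $\widehat\Phi_{i,t}(\bs\xi_\oNi^{t-1}) := \Psi_{i,t}(\bs\xi_i^{t-1},[\widehat{\bs x}_j^{\,t}]_{j\in\mc N_i})$, and use the invariance $\widehat{\bs x}_i\in\mc X_i$ to transfer the operational constraints and the objective bound. The one notable difference is that you justify the invariance via an explicit finite-iteration fixed-point argument exploiting the one-step time delay, whereas the paper simply asserts $\widehat{\bs\chi}_i\in\mc X_i$ as a consequence of the recursive construction without spelling out how the potential circularity (when the physical graph has cycles) is resolved; your treatment is more careful on this point.
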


The following corollary summarizes the relation, in terms of optimal value and cost, between the centralized \PCd and local information exchange \PDd, which is an immediate implication of Theorems \ref{thm::1} and \ref{thm::3}.
\begin{corollary}\label{corr::1}
	\PDd is a conservative approximation of \PCd in the following sense: every feasible solution of \PDd is feasible in \PCd, and the optimal value of \PDd is equal or larger than the optimal value of \PCd.
\end{corollary}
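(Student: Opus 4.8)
The plan is to prove Corollary~\ref{corr::1} purely by transitivity, composing the two conservative-approximation relations already established in Theorems~\ref{thm::1} and~\ref{thm::3}. No new policy construction or worst-case estimate is needed: both the feasibility inclusion and the optimal-value inequality for the pair (\PDd,\PCd) are obtained by chaining the corresponding statements for the intermediate problem \PSCd.

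For the feasibility inclusion I would argue as follows. Let $(\bs\Psi_i,\mc X_i)_{i\in\mc M}$ be an arbitrary feasible point of \PDd. By Theorem~\ref{thm::3}, this point induces, through the embedding of strictly causal local disturbance feedback policies into strictly causal partially nested disturbance feedback policies used in that proof, a feasible point of \PSCd. Applying Theorem~\ref{thm::1} to the resulting partially nested point then yields a feasible point of \PCd. Since the object produced by Theorem~\ref{thm::3} is precisely the type of feasible point that Theorem~\ref{thm::1} accepts as input, the two maps compose, and hence every feasible point of \PDd maps to a feasible point of \PCd, giving the required inclusion of feasible regions.

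For the optimal-value claim I would simply concatenate the two inequalities: Theorem~\ref{thm::3} gives that the optimal value of \PDd is at least that of \PSCd, while Theorem~\ref{thm::1} gives that the optimal value of \PSCd is at least that of \PCd, so transitivity of $\geq$ yields the assertion for \PDd versus \PCd. The only point that deserves attention—rather than constituting a genuine obstacle—is confirming that the two embeddings are compositional, namely that the partially nested policy constructed from a local policy in Theorem~\ref{thm::3} lies in the exact domain on which Theorem~\ref{thm::1} operates, and that the objective value is tracked consistently through both steps so that the chained inequality remains valid. Since each theorem already packages feasibility preservation together with its value inequality, no further estimate is required and the corollary follows immediately once compositionality is noted.
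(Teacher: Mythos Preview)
Your proposal is correct and matches the paper's own treatment: the corollary is stated there as ``an immediate implication of Theorems~\ref{thm::1} and~\ref{thm::3},'' i.e., exactly the transitivity argument you describe. Your additional remarks on compositionality of the two embeddings and consistent tracking of objective values merely make explicit what the paper leaves implicit, so no divergence in approach exists.
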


\PDs is computationally intractable because $ (i) $ the optimization of the policies is performed over the infinite space of causal functions; $ (ii) $ the optimization of the state forecast sets $ \mc X_i $ is performed over arbitrary sets; and $ (iii) $ the constraints must be satisfied robustly for every uncertain realization.

\section{Solution method} \label{sec::SolMethod}

In this section, we discuss appropriate restrictions to \PDd that allow us to obtain a computationally tractable approximation. We begin by restricting each state forecast set $ \mc X_i $, with $ i \in \mc M $, to admit a convex conic structure. We denote by $ \mc F_{\CC}(\mb R^{N_{x,i}}) $ the field of sets generated by all the convex conic compact subsets of the power set of $ \mb R^{N_{x,i}} $. This design choice is motivated by the fact that every convex set admits a conic representation \cite[p. 15]{Rockafellar2015}. \PDdCC still remains computational intractable. As shown in Theorem \eqref{thm::4}, this is indeed the case even when its policies admit an affine structure, (e.g., see \cite{DanielPrimalDual,bental2004ars}), i.e., $ u_{i,t} = \Psi_{i,t}(\bs \xi_i^{t-1},\bs \zeta_\Ni^{t}) $ with
\begin{equation*}
\Psi_{i,t}(\bs \xi_i^{t-1},\bs \zeta_\Ni^{t}) =  v_{i,t} +   V_{i,t} \bs\xi_i^{t-1} + V_{\Ni,t} \bs \zeta_\Ni^{t}  ,
\end{equation*}
where $ v_{i,t} $,  $ V_{i,t} $ and  $ V_{\Ni,t} $ are the decision variables matrices, with appropriate dimensions, that define the affine policy. We denote by $\mc {SC}_{a}(\bs \xi_i, \bs \zeta_{\mc N_i})$ the finite dimensional space of affine policies, and refer to \PDd restricted to affine decision policies and convex conic state forecast sets as \PDdCCa. 
\begin{theorem} \label{thm::4}
	\PDdCCa is NP-hard.
\end{theorem}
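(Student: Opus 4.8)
\emph{Proof strategy.} The plan is to exhibit a polynomial-time reduction from a known NP-hard problem, exploiting the freedom in the state forecast sets $\mc X_i$ to carry the hardness; the affine policies and the dynamics will only play a supporting role. I would reduce from the problem of maximizing a positive definite quadratic form over the unit box, i.e. deciding, for given $P \succ 0$ and $\gamma$, whether $\max_{z \in [-1,1]^n} z^\top P z \geq \gamma$. This is NP-hard: writing $P = L + \epsilon I$ with $L$ a graph Laplacian and $\epsilon > 0$, the objective is convex, so its maximum over the box is attained at a vertex $z \in \{-1,1\}^n$, where $z^\top P z = z^\top L z + \epsilon n$; thus the problem contains the maximum cut problem. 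Factoring $P = M^\top M$ with $M$ invertible (possible since $P \succ 0$), deciding $\max_{z \in [-1,1]^n} z^\top P z \geq \gamma$ is the same as deciding $\max_{z \in [-1,1]^n} \|M z\|_2 \geq \sqrt{\gamma}$. I would fix $p = 2$ in the objective \eqref{eq::objFnc}, which is allowed by the problem definition and suffices to establish NP-hardness of the problem class.

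\emph{The gadget.} Given such an instance, I would construct a two-agent network $\mc M = \{i,j\}$ with $\mc N_i = \{j\}$ and $\mc N_j = \emptyset$. Agent $j$ is given no inputs ($n_{u,j} = 0$), a trivial objective ($Q_j = 0$), and a short horizon with dynamics driving a designated state slice to equal its disturbance $\bs\xi_j$, where $\Xi_j = [-1,1]^n$. Its constraint $\bs x_j \in \mc X_j$ then forces the corresponding slice of $\mc X_j$ to contain $[-1,1]^n$, and this containment is agent $j$'s only effect on the problem. Agent $i$ is likewise given no inputs ($n_{u,i} = 0$), a singleton disturbance set, and matrices chosen so that its penalized states reproduce $M\,\bs\zeta_j$, where $\bs\zeta_j \in \mc X_j$ is agent $i$'s belief of agent $j$'s state. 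Agent $i$'s worst-case cost is then $\max_{\bs\zeta_j \in \mc X_j} \|M \bs\zeta_j\|_2$. Since $\mc X_j$ enters the objective only through this (set-monotone) term, while agent $j$'s own cost is independent of $\mc X_j$, shrinking $\mc X_j$ can only help.

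\emph{Correctness and the main difficulty.} Combining these observations, the optimal value of the constructed instance equals $\max_{z \in [-1,1]^n} \|M z\|_2$: every feasible $\mc X_j$ must contain the box, which gives the lower bound, whereas the choice $\mc X_j = [-1,1]^n$ is itself a convex conic compact set, is feasible, and attains it. Hence solving the instance of \PDdCCa decides whether $\max_{z \in [-1,1]^n} \|M z\|_2 \geq \sqrt{\gamma}$, and the construction is clearly polynomial in $n$. The step I expect to be the main obstacle is not the logic of the reduction but reconciling it with the standing full-column-rank assumptions on $A_{i,t}$, $B_{i,t}$ and $E_{i,t}$: the Laplacian-based $P$ is rank deficient, the reading matrix must be injective, the silenced disturbance must still enter through a full-rank $E$, and the zero-input agents must be admissible. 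I would resolve these by the regularization $P = L + \epsilon I \succ 0$ so that $M = P^{1/2}$ is invertible, by realizing the singleton disturbance set as the degenerate polytope $\{\xi : \xi \geq 0,\; -\xi \geq 0\}$ fed through an identity $E$, and by setting $n_{u,i} = n_{u,j} = 0$ so that no $D$ or $R$ matrices appear and no policy freedom can lower the cost. A careful but routine verification then confirms that these choices preserve both the standing assumptions and the value of the worst-case term, completing the reduction.
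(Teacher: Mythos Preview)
Your reduction is correct and genuinely different from the paper's. The paper reduces from $3$-SAT and builds an instance with $2m$ agents (one pair per clause): the state forecast set of one agent in each pair is the interval $[\max(x_{i,1},x_{i,2}),1]$, and a robust constraint on the other agent together with a linear objective force this interval to collapse to $\{1\}$ at optimality, which happens iff the clause is satisfied. The hardness thus sits in the combinatorial structure of several decision-dependent intervals. Your argument instead reduces from box-constrained convex quadratic maximization (hence Max-Cut), uses only two agents, and pushes all the hardness into a \emph{single} forecast set: $\mc X_j$ is forced to contain a box, no agent benefits from enlarging it, and the $p=2$ cost of the downstream agent reads off $\max_{z\in[-1,1]^n}\|Mz\|_2$.

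What each buys: your construction is more economical (constant network size, no operational constraints, no inputs) and makes transparent that the hardness already lies in optimizing one convex compact set under a set-monotone robust cost. The paper's construction, by contrast, works with a linear objective, so it certifies hardness already for $p\in\{1,\infty\}$ rather than only for $p=2$; it also does not rely on the norm structure of the cost. Two small remarks on your write-up: first, the full-column-rank hypothesis does not apply to $Q_i$, so you can take $Q_i$ to be the signed edge--vertex incidence matrix $C$ of the graph (so that $C^\top C=L$) and dispense with the $\epsilon I$ regularization and the matrix square root, which otherwise introduces irrational data into the instance; second, because the objective only penalizes $x_{i,1},\dots,x_{i,T}$ and $x_{i,t}$ depends on $\zeta_{j,t-1}$, you need $T\ge 3$ for the penalized state of agent $i$ to see the ``random'' slice $x_{j,2}$ of agent $j$---with $A_{i,t}=B_{i,t}=E_{i,t}=I$ and $\Xi_i=\{0\}$ this goes through without violating any rank assumption.
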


To gain tractability, we further restrict  $ \mc X_i$ to  sets that can be represented through an affine transformation of a fixed set, in a similar spirit as \cite{Jaillet2016,Zhang2017,Bitlislioglu2017}. Considering agent $i$, for a given $K_i\in\mathbb{Z}_+$ we define matrices $P_{i,k}\in\mathbb{R}^{N_{x,i} \times N_{x,i}} $ to be orthogonal projection of the state $\bs{x}_i$ such that 
\begin{equation*}
\bs{x}_i = \sum_{k=1}^{K_i} P_{i,k}\bs{x}_{i}.
\end{equation*}
For a given convex conic compact set  $ \mc S_i $,  $ \mc X_i$ is restricted to 
\begin{equation}\label{eq::SFSv1}
\begin{array}{l}
\mc X_i(y_i,z_i) = \displaystyle\left\{\bs{x}_i \in\mathbb{R}^{N_{x,i}} \,:\, \exists \bs s_{i} \in \mc S_i \text{ s.t. } \bs x_{i} = \sum_{k=1}^{K_i} y_{i,k} P_{i,k} \bs s_{i} + z_{i} \right\}
\end{array}
\end{equation}
where the decision variables that define the shape of the state forecast set are vectors $y_i = [y_{i,1},\ldots,y_{i,K_i}]^\top\in\mathbb{R}^{K_i}_+$ and $ z_{i}\in\mathbb{R}^{N_{x,i}}$. Set $\mc S_i$ is expressed using given matrices $ G_{i,k} \in \mathbb{R}^{\ell_{k,i} \times N_{x,i}} $, vectors $ g_{i,k} \in \mathbb{R}^{\ell_k }$ and convex cones $ \mc K_{i,k} $, as follows
\begin{equation}
\begin{array}{l}
\mc S_i = \Big\{\bs{s}_{i} \in \mb R^{N_{x,i}}\;:\;  G_{i,k} P_{i,k} \bs s_{i} \preceq_{\mc K_{i,k}}  g_{i,k},\; k=1,\ldots,K_i \Big\}.
\end{array}
\end{equation}
The positive scalar decision $y_{i,k}$ allows to scale the $k$-th constraint  $G_{i,k} P_{i,k} \bs s_{i} \preceq_{\mc K_{i,k}} g_{i,k}$ thus controlling the shape of the state forecast set in the direction defined by the states in the projection $P_{i,k}\bs{x}_i$, while vector $z_i$ is responsible for the translation of the set. Henceforth, we denote by $ \mc F_{\AT}(\mc S_i) $ the field of bounded convex conic sets, $ \wh{\mc X}_i(y_i, z_i) $, that can be represented through an affine transformation of a fixed set $ \mc S_i $.

\begin{example}
	Consider the state of agent $i$ such that $\bs x_{i}\in\R^2$. If we want to enclose states $ x_{i,1}$ and $ x_{i,2}$ within a hyper-rectangular, we first set   $P_{i,1} = \left(\begin{smallmatrix}1&0\\0&0\end{smallmatrix}\right)$ and $P_{i,2} = \left(\begin{smallmatrix}0&0\\0&1\end{smallmatrix}\right)$, and $\mc S_i$ is given by 
	\begin{equation*}
	\mc S_i = \Big\{\bs{s}_{i} \in \mb R^{2}\;:\; \|{s}_{i,1}\|_\infty \leq 1 ,\,\|s_{i,2}\|_\infty \leq 1 \Big\}
	\end{equation*}
	implying that 	$ G_{i,1} = \left(\begin{smallmatrix}0&0\\-1&0\end{smallmatrix}\right)$, $ g_{i,1} = \left(\begin{smallmatrix}1\\0\end{smallmatrix}\right)$ and $ G_{i,2} = \left(\begin{smallmatrix}0&0\\0&-1\end{smallmatrix}\right)$, $ g_{i,2} = \left(\begin{smallmatrix}1\\0\end{smallmatrix}\right)$, and $\mc K_{i,1}$, $\mc K_{i,2}$ are both infinity cones. Such a construction is also graphically illustrated in Fig.~\ref{fig::PrimitiveSets}.
	\begin{figure}[ht]
		\centering
		\includegraphics[width = 0.6\textwidth]{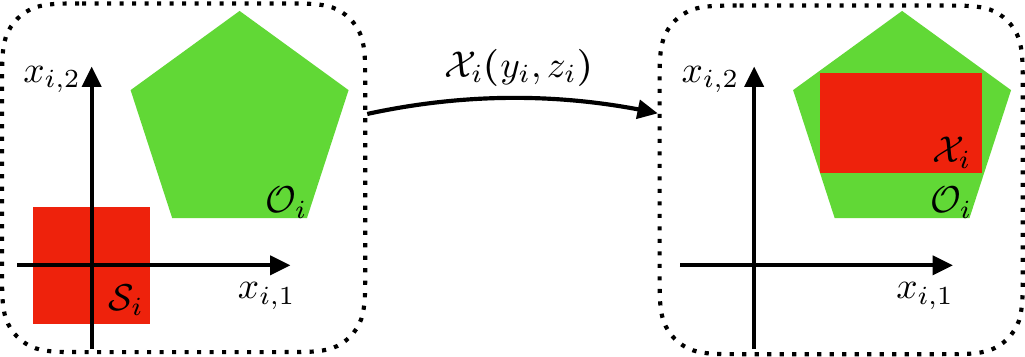}
		\caption{Example in $ \mb R^2 $ of $ \mc X_i $ construction using a rectangular $ \mc S_i $.}
		\label{fig::PrimitiveSets}
	\end{figure}
\end{example}

\begin{example}  
	Consider the state of agent $i$ such that $\bs x_{i}\in\R^3$. If we want to enclose states $ x_{i,1}$ and $ x_{i,2}$ within a two dimensional circle, and state $ x_{i,3}$ within a one dimensional box, we first set   $P_{i,1} = \left(\begin{smallmatrix}1&0&0\\0&1&0\\0&0&0\end{smallmatrix}\right)$ and $P_{i,2} = \left(\begin{smallmatrix}0&0&0\\0&0&0\\0&0&1\end{smallmatrix}\right)$, and $\mc S_i$ is given by 
	\begin{equation*}
	\mc S_i = \Big\{\bs{s}_{i} \in \mb R^{3}\;:\; \|({s}_{i,1},{s}_{i,2})\|_2 \leq 1 ,\,\| s_{i,3}\|_2\leq 1 \Big\}
	\end{equation*}
	implying that $ G_{i,1} = \left(\begin{smallmatrix}0&0&0\\-1&0&0\\0&-1&0\end{smallmatrix}\right)$, $ g_{i,1} = \left(\begin{smallmatrix}1\\0\\0\end{smallmatrix}\right)$ and $ G_{i,2} = \left(\begin{smallmatrix}0&0&0\\0&0&-1\end{smallmatrix}\right)$, $ g_{i,2} = \left(\begin{smallmatrix}1\\0\end{smallmatrix}\right)$, and $\mc K_{i,1}$, $\mc K_{i,2}$ are both second order cones.
\end{example}

Approximation \eqref{eq::SFSv1} is more restrictive than the one proposed in \cite{Jaillet2016,Zhang2017,Bitlislioglu2017} which allows for arbitrary translation/rotation of set $\mathcal{S}_i$. However, this additional restriction is crucial since set \eqref{eq::SFSv1} is in generally  a non-convex region  due to the multiplication between $ y_{i,k}$ and $ \bs s_{i} $ which are both decision variables in the resulting optimization problem. By taking advantage of the fact that $ \mc S_i $ is compact, $ \mc X_i(y_i,z_i) $ can be expressed as the following convex set. 
\begin{equation}\label{eq::SFSv2}
\begin{array}{l@{\,}l}
\mc {\widehat X}_i(y_i,z_i) = \displaystyle\Bigg\{\bs{x}_i \,:\, \exists \bs  \nu_{i,k}\in\mathbb{R}^{N_{x,i}} \text{ s.t. } & \displaystyle\bs x_{i} = \sum_{k=1}^{K_i} P_{i,k} \bs \nu_{i,k} + z_{i},\\
& G_{i,k} P_{i,k} \bs \nu_{i,k}  \preceq_{\mc K_{i,k}}  y_{i,k}  g_{i,k},\;k=1,\ldots,K_i \Bigg\}
\end{array}
\end{equation}
where $ \bs \nu_{i,k} $ are auxiliary variables. The relationship between sets \eqref{eq::SFSv1} and \eqref{eq::SFSv2} is summarized in the following proposition. 
\begin{proposition}\label{prop::nCc}
	Set $ \mc X_{\FS} = \{(\bs x_i, y_i, z_i)\,:\, \bs x_i \in \mc X_i(y_i, z_i) \} $ is equivalent to $ \wh{\mc X}_{\FS} =\{(\bs x_i, y_i, z_i)\,:\, \bs x_i \in \mc {\widehat X}_i(y_i,z_i) \} $ in the following sense: there exist a unique mapping between feasible points in $ {\mc X}_{\FS} $ and $ \wh{\mc X}_{\FS} $.
\end{proposition}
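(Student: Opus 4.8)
The plan is to establish the equivalence by exhibiting an explicit, invertible change of the existential certificate variables relating the (in general nonconvex) description \eqref{eq::SFSv1} to the convex reformulation \eqref{eq::SFSv2}, while keeping $(y_i,z_i)$ fixed throughout. Since both $\mc X_{\FS}$ and $\wh{\mc X}_{\FS}$ carry the same $(y_i,z_i)$ coordinates, it suffices to show that for every $y_i\in\R^{K_i}_+$ and $z_i\in\R^{N_{x,i}}$ we have $\mc X_i(y_i,z_i)=\wh{\mc X}_i(y_i,z_i)$, and to track how a certificate $\bs s_i$ maps to certificates $\{\bs\nu_{i,k}\}$ and back. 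Throughout I will use that the $P_{i,k}$ are mutually orthogonal projections summing to the identity, so that $P_{i,k}P_{i,l}=\delta_{kl}P_{i,k}$ and $\R^{N_{x,i}}=\bigoplus_{k}\mathrm{range}(P_{i,k})$; consequently the $k$-th conic constraint only constrains the $P_{i,k}$-component, and $\mc S_i$ splits as a product across these subspaces.

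For the inclusion $\mc X_i(y_i,z_i)\subseteq\wh{\mc X}_i(y_i,z_i)$, I take $\bs x_i$ with certificate $\bs s_i\in\mc S_i$ and set $\bs\nu_{i,k}:=y_{i,k}\bs s_i$. Then $\sum_{k}P_{i,k}\bs\nu_{i,k}+z_i=\sum_{k}y_{i,k}P_{i,k}\bs s_i+z_i=\bs x_i$, and since $y_{i,k}\ge 0$ and $\mc K_{i,k}$ is closed under nonnegative scaling, the relation $g_{i,k}-G_{i,k}P_{i,k}\bs s_i\in\mc K_{i,k}$ gives $y_{i,k}g_{i,k}-G_{i,k}P_{i,k}\bs\nu_{i,k}\in\mc K_{i,k}$, i.e.\ the defining constraints of \eqref{eq::SFSv2} hold. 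This direction is routine.

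The reverse inclusion is where the work lies, and it splits according to whether the scalars $y_{i,k}$ vanish. When $y_{i,k}>0$ for all $k$, I define $\bs s_i:=\sum_{k}y_{i,k}^{-1}P_{i,k}\bs\nu_{i,k}$, so that $P_{i,k}\bs s_i=y_{i,k}^{-1}P_{i,k}\bs\nu_{i,k}$ by mutual orthogonality; dividing $G_{i,k}P_{i,k}\bs\nu_{i,k}\preceq_{\mc K_{i,k}}y_{i,k}g_{i,k}$ by $y_{i,k}>0$ shows $\bs s_i\in\mc S_i$, while $\sum_{k}y_{i,k}P_{i,k}\bs s_i+z_i=\sum_{k}P_{i,k}\bs\nu_{i,k}+z_i=\bs x_i$. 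The delicate case is $y_{i,k}=0$: then the $k$-th constraint of \eqref{eq::SFSv2} degenerates to $G_{i,k}P_{i,k}\bs\nu_{i,k}\preceq_{\mc K_{i,k}}0$, so $P_{i,k}\bs\nu_{i,k}$ is a recession direction of the projected feasible set $\{P_{i,k}\bs s : G_{i,k}P_{i,k}\bs s\preceq_{\mc K_{i,k}}g_{i,k}\}$. Here I invoke compactness of $\mc S_i$: boundedness forces the recession cone in $\mathrm{range}(P_{i,k})$ to be trivial, hence $P_{i,k}\bs\nu_{i,k}=0$ and therefore $P_{i,k}\bs x_i=P_{i,k}z_i$. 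This matches exactly the $k$-th block of \eqref{eq::SFSv1}, where the factor $y_{i,k}=0$ annihilates $P_{i,k}\bs s_i$; choosing the $P_{i,k}$-component of $\bs s_i$ to be any point feasible for the $k$-th (nonempty, bounded) constraint then yields a valid certificate. I expect this recession-cone argument, together with the need to assemble the vanishing and non-vanishing blocks into a single $\bs s_i\in\mc S_i$ through the orthogonal decomposition, to be the main obstacle; the compactness hypothesis on $\mc S_i$ is precisely what rules out a spurious enlargement of $\wh{\mc X}_i$ over $\mc X_i$ along null directions. Finally, I note that the maps $\bs s_i\mapsto\{y_{i,k}\bs s_i\}$ and $\{\bs\nu_{i,k}\}\mapsto\bs s_i$ are mutually inverse on the blocks with $y_{i,k}>0$ and that the triples $(\bs x_i,y_i,z_i)$ are then in exact correspondence, establishing $\mc X_{\FS}=\wh{\mc X}_{\FS}$ and the claimed identification of feasible points.
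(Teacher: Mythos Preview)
Your proposal is correct and follows essentially the same approach as the paper: the substitution $\bs\nu_{i,k}=y_{i,k}\bs s_i$ for the forward direction and for the blocks with $y_{i,k}>0$, together with a recession-cone argument exploiting compactness of $\mc S_i$ to force $P_{i,k}\bs\nu_{i,k}=0$ when $y_{i,k}=0$. Your treatment is in fact more careful than the paper's, since you explicitly use the orthogonal decomposition $\R^{N_{x,i}}=\bigoplus_k\mathrm{range}(P_{i,k})$ to assemble a single certificate $\bs s_i\in\mc S_i$ across vanishing and non-vanishing blocks, a step the paper leaves implicit.
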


The restriction of \PDdCC to state forecast sets $ \wh{\mc X}_i(y_i, z_i) $ is given as
\begin{equation}\label{DecentralizedXab}
\begin{array}{@{}l}
\text{\;\;minimize\,} \displaystyle\sum\limits_{i = 1}^M \max\limits_{\bs \xi_i \in \Xi_i, \bs \zeta_{\Ni} \in \wh{\mc X}_{\Ni}} J_i(\bs x_i,\bs u_i) \\
\left.\begin{array}{@{}r@{\;}l@{}}
\text{subject to\,}& \bs {\psi}_{i}(\cdot) \in \mc {SC}(\bs \xi_i, \bs \zeta_{\Ni}), \bs u_i = \bs {\Psi}_{i}(\bs \xi_i, \bs \zeta_{\Ni})\\
& \bs x_{i} = f_{i}\big(\bs \zeta_{\Ni}, \bs u_{i}, \bs \xi_{i}\big)\\
& (\bs x_{i}, \bs u_{i}) \in \mc O_i \\
& \bs x_i \in \wh{\mc X}_i(y_i, z_i),\, \wh{\mc X}_i(\cdot) \in \mc F_{\AT}(\mc S_i)
\end{array}\right \rbrace \begin{array}{@{}l}
\forall \bs \zeta_{\Ni} \in \wh{\mc X}_{\Ni}(y_\Ni, z_\Ni)\\
\forall \bs \xi_i \in \Xi_i
\end{array}\forall i \in \mc M,
\end{array}
\tag{$\text{L}:\mc {SC}(\bs \xi_i, \bs \zeta_{\Ni}), \mc{F}_\AT(\mc S_i)$}
\end{equation}
where for each $i\in \mc M$ we define $ \wh{\mc X}_{\Ni}(y_{\Ni}, z_\Ni) = \bigtimes_{j \in \mc N_i} \wh{\mc X}_j(y_{j}, z_j) $. The optimization variables are $ \bs {\Psi}_{i}(\cdot) $, $ y_i $ and $ z_i $ for all $ i \in \mc M $. \PDdAT has a semi-infinite structure with decision-dependent uncertainty sets. Thus, in general, it admits a non-convex feasible region because of the decision-dependent uncertainty sets. This can be verified by noticing that reformulating any semi-infinite robust constraint in the problem leads to a non-convex set of inequality and equality constraints. To obtain a problem formulated with decision-independent uncertainty sets, we propose, in the spirit of \cite{Jaillet2016,Zhang2017,Bitlislioglu2017}, the design of affine causal feedback policies $ \Gamma_{i,t}:\mb R^{\bar n_{i}^t} \rightarrow \mb R^{n_{u,i}} $ such that the control input at each time step is given by $ u_{i,t} = \Gamma_{i,t}(\bs \xi_i^{t-1}, \bs s^t_\Ni) $. We write $ \bs \Gamma_i(\bs \xi_i, \bs s_{\Ni})  = [\Gamma_{i,t}(\bs \xi_i^{t-1}, \bs s^t_\Ni)]_{t\in\mc T} $ to denote the policy concatenation over time, and we define as $ \mc {SC}(\bs \xi_i, \bs s_\Ni) $ the space of strictly causal disturbance feedback policies. In this context, the counterpart of \PDdAT is given as
\begin{equation}\label{DecentralizedFinal}
\begin{array}{@{}l}
\text{\;\;minimize } \displaystyle\sum\limits_{i = 1}^M \max\limits_{\bs \xi_i \in \Xi_i, \bs s_{\Ni} \in \mc S_{\Ni}} J_i(\bs x_i,\bs u_i) \\
\left.\begin{array}{@{}r@{\;}l@{}}
\text{subject to }& \bs {\Gamma}_{i}(\cdot) \in \mc {SC}(\bs \xi_i, \bs s_{\Ni}), \,\bs u_i = \bs {\Gamma}_{i}(\bs \xi_i, \bs s_{\Ni})\\
& \bs \zeta_{\Ni} = Y_\Ni \bs s_\Ni + z_\Ni\\ 
& \bs x_{i} = f_{i}\big(\bs \zeta_{\Ni}, \bs u_{i}, \bs \xi_{i}\big)\\
& (\bs x_{i}, \bs u_{i}) \in \mc O_i \\
& \bs x_i \in \wh{\mc X}_i(y_i, z_i),\, \wh{\mc X}_i(\cdot) \in \mc F_{\AT}(\mc S_i)
\end{array}\right \rbrace \begin{array}{@{}l}
\forall \bs s_{\Ni} \in \mc S_{\Ni}\\
\forall \bs \xi_i \in \Xi_i
\end{array}\forall i \in \mc M,
\end{array}
\tag{$\text{L}:\mc{SC}(\bs \xi_i, \bs s_{\Ni}), \mc{F}_\AT(\mc S_i)$}
\end{equation}
where $ \mc S_{\Ni} = \bigtimes_{j \in \mc N_i} \mc S_j $ and, with a slight abuse of notation, $ Y_i = \sum_{k=1}^{K_i} y_{i,k} P_{i,k} $ and $ Y_\Ni \bs s_\Ni + z_\Ni = \left[ Y_i \bs s_{j} + z_j \right]_{j \in \mc N_i} $ for each $ i \in \mc M $. The decision variables are $ \bs \Gamma_i(\cdot) $, $ y_i $ and $ z_i $ for all $ i \in \mc M $. Note that for \PDdATC to be computationally tractable, we further need to restrict the infinite-dimensional structure of its decision policies, e.g., to admit an affine structure, as suggested in \cite{DanielPrimalDual,bental2004ars}. 

In the following, we will show the relationship between \PDdATC and \PDdAT. To this end, we define the linear mapping $ L_{i,t}: \mb R^{n_{x,i}} \rightarrow \mb R^{n_{x,i}} $, $ s_{i,t} \mapsto x_{i,t} $, as,
\begin{equation}\label{app::map1}
L_{i,t}(s_{i,t}) = Y_{i,t} s_{i,t} + z_{i,t},
\end{equation}
and the linear mapping, $ R_{i,t}: \mb R^{n_{x,i}} \rightarrow \mb R^{n_{x,i}} $, $ x_{i,t} \mapsto s_{i,t} $, as,
\begin{equation}\label{app::map2}
R_{i,t}(x_{i,t}) = Y_{i,t}^{+}(x_{i,t} -z_{i,t})
\end{equation}
where $  Y^+_{i,t} := (Y_{i,t}^\top Y_{i,t})^{-1}Y^\top_{i,t} $ is the pseudo-inverse of the positive semi-definite matrix $ Y_{i,t} $. Note that the mapping $ R_{i,t} $ may not be unique because of the pseudo-inverse $ Y^+_{i,t} $. Moreover, $ L_{i,t} $ can be viewed as a ``left inverse'' of the operator $ R_{i,t} $, i.e., it satisfies $ L_{i,t}\big(R_{i,t}(x_{i,t})\big) = x_{i,t}  $. Using this mapping, the following theorem establishes equivalence between \PDdATC and \PDdAT.

\begin{theorem} \label{thm::5}
	\PDdATC is equivalent to \PDdAT in the following sense: there exist a mapping between feasible solutions in \PDdATC and \PDdAT. Moreover, the optimal value of \PDdATC is equal to the optimal value of \PDdAT.
\end{theorem}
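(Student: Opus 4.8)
The plan is to prove both assertions — existence of a correspondence between feasible solutions and equality of optimal values — by explicitly transporting solutions across the change of variables $\bs \zeta_\Ni = Y_\Ni \bs s_\Ni + z_\Ni$ that distinguishes the two formulations, using the affine maps \eqref{app::map1} and \eqref{app::map2}. For fixed decisions $(y_i,z_i)$ write $L_\Ni(\bs s_\Ni) = Y_\Ni \bs s_\Ni + z_\Ni$ and let $R_\Ni$ be the neighbor-wise stacking of the pseudo-inverse maps \eqref{app::map2}. The facts that drive everything are set-level: by the definition \eqref{eq::SFSv1} and Proposition~\ref{prop::nCc}, $L_\Ni$ maps the fixed primitive set $\mc S_\Ni$ \emph{onto} the decision-dependent forecast set $\wh{\mc X}_\Ni(y_\Ni,z_\Ni)$; conversely $R_\Ni$ maps $\wh{\mc X}_\Ni(y_\Ni,z_\Ni)$ back into $\mc S_\Ni$ and is a left inverse of $L_\Ni$ on that image, i.e. $L_\Ni(R_\Ni(\bs \zeta_\Ni)) = \bs \zeta_\Ni$. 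I would establish these two facts first; the only nontrivial one is that $R_\Ni$ lands in $\mc S_\Ni$, which follows from the orthogonality $P_{j,k}P_{j,l}=0$, $\sum_k P_{j,k}=I$, and the fact that the directions collapsed by $y_{j,k}=0$ stay feasible because $g_{j,k}\in\mc K_{j,k}$.

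Next I would construct the map from \PDdATC to \PDdAT. Keeping $(y_i,z_i)$, set $\bs \Psi_i(\bs \xi_i,\bs \zeta_\Ni) := \bs \Gamma_i(\bs \xi_i, R_\Ni(\bs \zeta_\Ni))$. For every admissible $(\bs \xi_i,\bs \zeta_\Ni)$ with $\bs \zeta_\Ni\in\wh{\mc X}_\Ni$, put $\bs s_\Ni = R_\Ni(\bs \zeta_\Ni)\in\mc S_\Ni$; then the left-inverse identity gives $Y_\Ni \bs s_\Ni + z_\Ni = \bs \zeta_\Ni$, so the inputs $\bs u_i$, the belief fed into $f_i$, and hence the states $\bs x_i$ generated by the two problems coincide. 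Therefore the constraints $(\bs x_i,\bs u_i)\in\mc O_i$ and $\bs x_i\in\wh{\mc X}_i(y_i,z_i)$ holding in \PDdATC at $(\bs \xi_i,\bs s_\Ni)$ are inherited in \PDdAT at $(\bs \xi_i,\bs \zeta_\Ni)$, establishing feasibility. Since the per-realization cost is unchanged and the maximization over $\bs \xi_i\in\Xi_i$ is common to both problems, the inner maximum over $\bs \zeta_\Ni\in\wh{\mc X}_\Ni$ equals the maximum over $R_\Ni(\wh{\mc X}_\Ni)\subseteq\mc S_\Ni$, which is bounded above by the \PDdATC objective. This produces a feasible \PDdAT point of no larger cost, so the optimal value of \PDdAT is at most that of \PDdATC.

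For the reverse map from \PDdAT to \PDdATC, again keep $(y_i,z_i)$ and set $\bs \Gamma_i(\bs \xi_i,\bs s_\Ni) := \bs \Psi_i(\bs \xi_i, L_\Ni(\bs s_\Ni))$. For every $\bs s_\Ni\in\mc S_\Ni$ the induced $\bs \zeta_\Ni = L_\Ni(\bs s_\Ni) = Y_\Ni \bs s_\Ni + z_\Ni$ lies in $\wh{\mc X}_\Ni$, the coupling constraint of \PDdATC holds by construction, and the same identification of $\bs u_i$ and $\bs x_i$ transfers the remaining constraints from \PDdAT. Because $L_\Ni$ is surjective onto $\wh{\mc X}_\Ni$, the inner maximum over $\bs s_\Ni\in\mc S_\Ni$ now equals the inner maximum over $\bs \zeta_\Ni\in\wh{\mc X}_\Ni$ \emph{exactly}, so this map preserves the objective value and shows the optimal value of \PDdATC is at most that of \PDdAT. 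Combining the two inequalities proves the optimal values coincide, while the two constructions are precisely the claimed mapping between feasible solutions.

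The step I expect to be most delicate is the interplay with the pseudo-inverse. Unless every scaling $y_{j,k}$ is strictly positive, $Y_\Ni$ is singular, $R_\Ni$ is only a left inverse of $L_\Ni$ on the image $\wh{\mc X}_\Ni$ (not a genuine inverse), and distinct primitives $\bs s_\Ni$ collapsing to the same belief $\bs \zeta_\Ni$ may be assigned different inputs by a general $\bs \Gamma_i$. This is exactly why the \PDdATC-to-\PDdAT direction yields only an inequality rather than an objective-preserving bijection, and it is the reason I would combine the two directions at the level of optimal values rather than attempting a single bijection. I would take particular care to verify $R_\Ni(\wh{\mc X}_\Ni)\subseteq\mc S_\Ni$ and the identity $L_\Ni\circ R_\Ni=\mathrm{id}$ on $\wh{\mc X}_\Ni$, since these are the only places where the structure of $\mc S_\Ni$ and the orthogonal projections $P_{j,k}$ genuinely enter.
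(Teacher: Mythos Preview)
Your proposal is correct and follows essentially the same route as the paper: the paper also transports feasible solutions via the compositions $\bs\Psi_i=\bs\Gamma_i(\cdot,R_\Ni(\cdot))$ and $\bs\Gamma_i=\bs\Psi_i(\cdot,L_\Ni(\cdot))$, using a preliminary lemma that encodes exactly the set facts $L_\Ni(\mc S_\Ni)=\wh{\mc X}_\Ni$ and $R_\Ni(\wh{\mc X}_\Ni)\subseteq\mc S_\Ni$ you single out. Your accounting for the objective is in fact slightly more careful than the paper's---the paper asserts per-solution equality of costs in both directions, whereas your observation that the $R_\Ni$-direction only yields an inequality (with equality recovered at the level of optimal values via the surjectivity of $L_\Ni$) is the more accurate statement when some $y_{j,k}$ vanish.
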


In view of Theorem \eqref{thm::5}, the mapping between feasible solutions in \PDdATC and \PDdAT can now be defined. This allows a local controller to evaluate the resulting policy based on the established local communication network. Given $ u_{i,t} = \Gamma_{i,t}(\bs \xi_i^{t-1}, \bs s_\Ni^t) $ as the optimal control policy derived from the solution of \PDdATC, then $ u_{i,t} = \Psi_{i,t}(\bs \xi_i^{t-1}, \bs \zeta_\Ni^t) = \Gamma_{i,t}(\bs \xi_i^{t-1},[ R_j^t(\bs \zeta_j^t)]_{j\in \mc N_i}) $ is the optimal control policy for \PDdAT.

\PDdATC retains the coupling structure of \PDs since agent $ i $ only needs to communicate to its direct neighbors the scaling $ y_i $ and translation $ z_i $ of its predefined fixed set $ \mc S_i $. Contrary to \PCd and \PSCd, there is no coupling introduced among the agents due to the form of their decision policies. This is because the agents treat the effect of their neighbors as a disturbance to their systems. In this framework, we can claim that the proposed method alleviates concerns on privacy since this limited communication scheme does not reveal the exact characteristics of the dynamics within each system. Moreover, the loosely coupled structure of \PDdATC makes it amendable to distributed computation algorithms as ADMM to efficiently solve it \cite{Boyd2004}.

\section{Numerical results} \label{sec::Numerics}
In this section, we conduct a number of simulation-based studies to assess the efficacy of the proposed decentralized controller synthesis approach. We focus our attention on three examples: $ (i) $ A toy example that allow us to illustrate, numerically and graphically, the connection between the shape of the primitive sets and the solution quality; $ (ii) $ A system composed of masses that are connected by springs and dampers which is suitable to study the scalability and the closed-loop behavior of the proposed methodology; $ (iii) $ a supply chain operated in a distributed decision making authority where we exhibit the efficacy of the proposed method as a contract design mechanism and we investigate its performance on various coupling network structures.

\subsection{Example 1: Illustrative example}

\begin{figure}[ht]
	\includegraphics[width=0.6\textwidth]{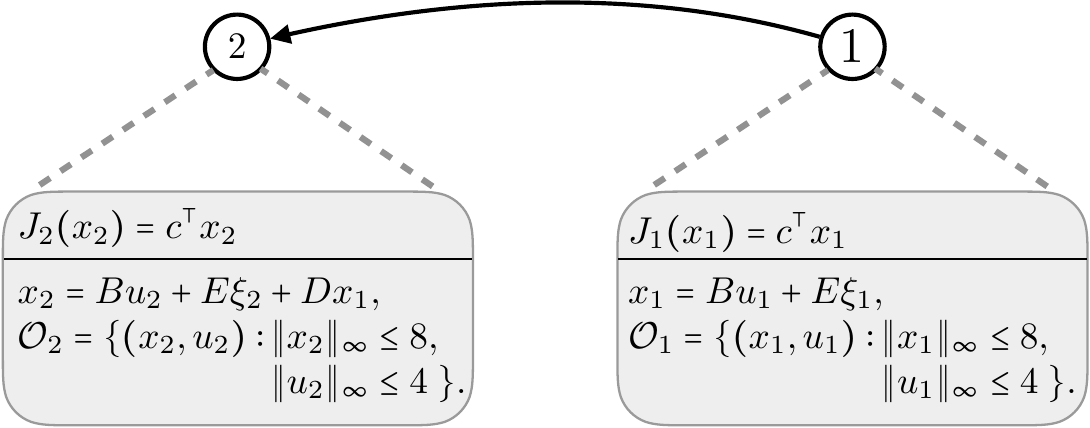}
	\caption{Physical and information structure of the two agents in the system.}
	\label{fig::toyExample}
\end{figure}

We consider a system composed by two agents with states $ x_1, x_2 \in \mb R^2 $, inputs $ u_1, u_2 \in \mb R $ and disturbances $ \xi_1, \xi_2 \in \Xi = \{\xi \in \mb R^2: \| \xi \|_\infty \leq 1 \} $, respectively. The nested physical and information communication network along with the objective functions and constraint sets of the agents are shown in Fig. \ref{fig::toyExample}. The system matrices are given as,
\begin{equation}
	c = \begin{bmatrix}
	1 \\ -1
	\end{bmatrix}, \;B = \begin{bmatrix}
	1 \\ 0.8
	\end{bmatrix},\; E = \begin{bmatrix}
	1 & -1 \\ -1 & 1
	\end{bmatrix} \text{ and } D = \begin{bmatrix}
	1 & 0 \\ 0 & -2
	\end{bmatrix}.
\end{equation}

The system-wide robust optimization problem is formulated as follows:
\begin{equation}
\label{problem_toyExample_Cent}
	\begin{array}{@{}l@{}}
		\min \max\limits_{\xi_1, \xi_2} J_1(x_1) + \max\limits_{\xi_1, \xi_2} J_2(x_2) \\
		\left.\begin{array}{r@{\;}l@{}}
		\st& u_1 \in \mc {SC}_1, u_2 \in \mc {SC}_2,\\
		& x_1 = B u_1 + E \xi_1,\\
		& (x_1,u_1) \in \mc O_1,\\
		& x_2 = B u_2 + E \xi_2 + D x_1,\\
		& (x_1,u_1) \in \mc O_2,
		\end{array}\right \rbrace \begin{array}{@{}l}
		\forall \xi_1, \xi_2 \in \Xi.
		\end{array}
		\end{array}
\end{equation}
with $ \mc {SC}_1, \mc {SC}_2 \subseteq \mc {SC}(\xi_1,\xi_2) $ where $ \mc {SC}(\cdot) $ denotes the infinite dimensional function spaces generated by strictly causal disturbance feedback policies on its arguments. In this setting, the equivalent to \PCd for the synthesis of optimal centralized controllers is obtained by restricting $ \mc {SC}_1 = \mc {SC}_2 = \mc {SC}(\xi_1,\xi_2) $. Similarly, the equivalent to \PSCd for the synthesis of optimal nested-information controllers is obtained by restricting $ \mc {SC}_1 = \mc {SC}(\xi_1) $ and $ \mc {SC}_2 = \mc {SC}(\xi_1,\xi_2) $. Both problems are solved by resorting to affine disturbance feedback policies. 

\begin{figure}[h]
	\begin{minipage}{0.33\textwidth}
		\subfigure[]{\includegraphics[width = \textwidth]{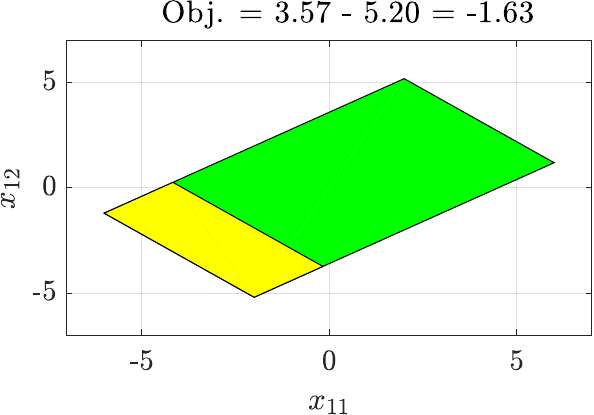}\label{fig::centralizedSynthesis_a}}
	\end{minipage}~
	\begin{minipage}{0.33\textwidth}
		\subfigure[]{\includegraphics[width = \textwidth]{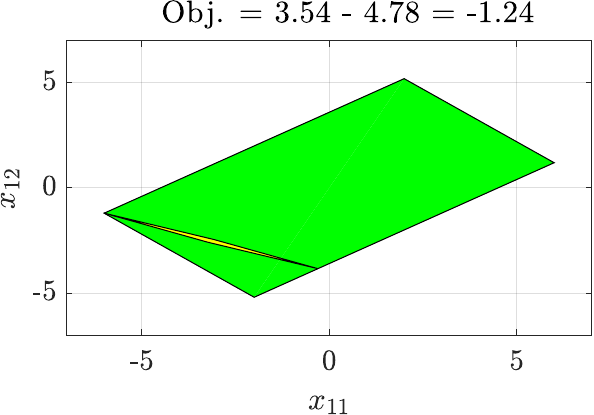}\label{fig::centralizedSynthesis_b}}
	\end{minipage}
	\caption{(green) Feasible set for $ x_1(w) $, (yellow) $ x_1(w) $ for Problem \eqref{problem_toyExample_Cent} with (a)  $ \mc {SC}_1 = \mc {SC}_2 = \mc {SC}(\xi_1,\xi_2) $ and (b) $ \mc {SC}_1 = \mc {SC}(\xi_1) $ and $ \mc {SC}_2 = \mc {SC}(\xi_1,\xi_2) $.}
	\label{fig::centralizedSynthesis}
\end{figure}
In Fig. \ref{fig::centralizedSynthesis} we depict in green the feasible set for $ x_1 $ of Problem \eqref{problem_toyExample_Cent} when $ \mc {SC}_1 = \mc {SC}_2 = \mc {SC}(\xi_1, \xi_2) $. In addition to this, Fig. \ref{fig::centralizedSynthesis_a} shows in yellow the region generated by the optimal centralized policy for $ x_1 $, while Fig. \ref{fig::centralizedSynthesis_b} shows in yellow the respective region for the optimal decentralized policy of it. To streamline the presentation, henceforth we refer to yellow regions as optimal centralized and nested-information, respectively. The resulting objective values are reported in the title of the respective figure as ``obj. = $ J_1(x_1) + J_2(x_2) $''. We observe that the information restriction imposed on the nested-information controllers synthesis results in a larger objective value and a smaller optimal region with respect to the centralized solution.

The synthesis of robust decentralized controllers using the communicated sets approach is given as,
\begin{equation}
\label{problem_toyExample_Decent}
\begin{array}{@{}l@{}}
\min \max\limits_{\xi_1} J_1(x_1) + \max\limits_{s_1, \xi_2} J_2(x_2) \\
\left.\begin{array}{r@{\;}l@{}}
\st& u_1 \in \mc C(\xi_1), u_2 \in \mc C(\xi_2,\zeta_1),\\
& x_1 = B u_1 + E \xi_1,\\
& (x_1,u_1) \in \mc O_1,\\
& x_1 \in \mc X_1 = y_1 \mc S_1 + z_i,\\
& \zeta_1 = y_1 s_1 + z_1,\\
& x_2 = B u_2 + E \xi_2 + D \zeta_1,\\
& (x_1,u_1) \in \mc O_2,
\end{array}\right \rbrace \begin{array}{@{}l}
\forall s_1 \in \mc S_1,\\
\forall \xi_1, \xi_2 \in \Xi.
\end{array}
\end{array}
\end{equation}
This problem is an instance of \PDdAT where the state forecast set $ \mc X_1 $ is expressed as the scaling, $ y_1 $, and translation, $ z_1 $, of a predefined primitive sets, $ \mc S_1 $. To solve it we use affine feedback policies to restrict the infinite structure of its decision variables.

\begin{figure}[h]
	\begin{minipage}{0.33\textwidth}
		\subfigure[]{\includegraphics[width = \textwidth]{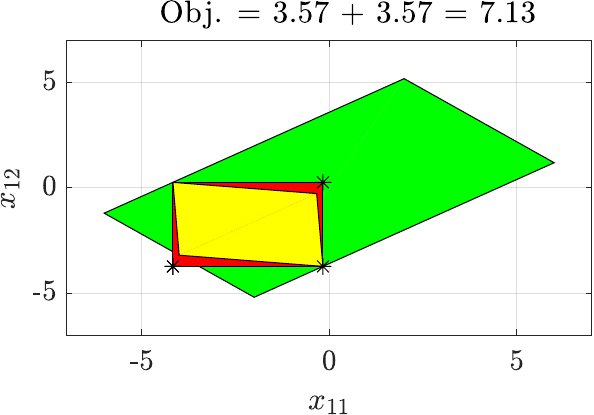}}
	\end{minipage}\hfill
	\begin{minipage}{0.33\textwidth}
		\subfigure[]{\includegraphics[width = \textwidth]{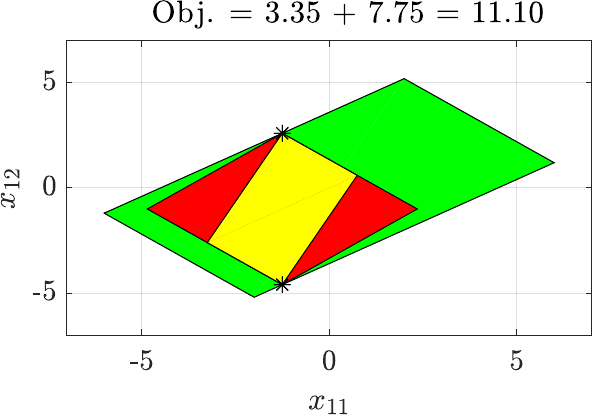}}
	\end{minipage}\hfill
	\begin{minipage}{0.33\textwidth}
		\subfigure[]{\includegraphics[width = \textwidth]{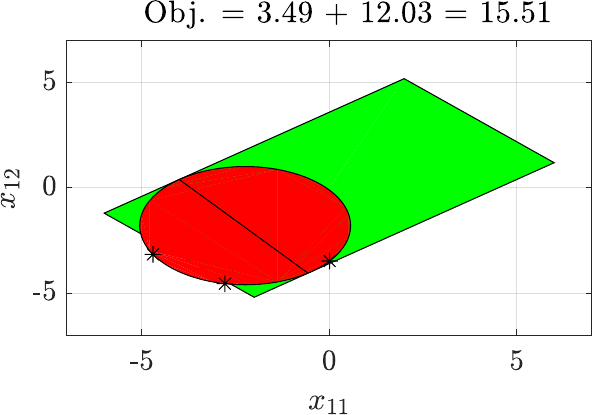}}
	\end{minipage}
	\caption{(green) Feasible set for $ x_1(w) $, (yellow) $ x_1(w) $ for Problem \eqref{problem_toyExample_Decent} with $ \mc S_1 $ as \mbox{(a) box}, \mbox{(b) rhombus}, and \mbox{(c) circle}.}
	\label{fig::decentralizedNorms}
\end{figure}
In what follows we investigate how the quality of the solution, in terms of objective value, is affected by the choice of the primitive set. In Fig. \eqref{fig::decentralizedNorms}, this comparison is performed with respect to a box, rhombus and circle primitive set. As previously mentioned, the area in green depicts the feasible set for $ x_1 $, the area in yellow depicts the region generated by the optimal policy of $ x_1 $ after solving Problem \eqref{problem_toyExample_Decent} using the respective primitive set. In addition, we show in red the area covered by the set $ \mc X_1 $ communicated by agent 1 to agent 2 and with black stars the worst-case scenarios for agent 2 in the state forecast set $ \mc X_1 $. We observe that the optimal region of agent 1 changes with the different primitive sets as an attempt to cooperate with agent 2. This cooperative behavior is also identified in the objective values as agent 1 ``sacrifices'' some of its optimality for the good of agent 2. Interestingly enough, part of the state forecast set $ \mc X_1 $ may lie outside the feasible region of the problem which adds conservativeness to the system as can be verified by inspecting the position of the worst-case scenarios. Moreover, since the worst-case scenarios are not necessarily placed to the corner points defined by the optimal region, agent 1 retains some of its privacy since the behavior of its optimal policy is not revealed to agent 2.

\begin{figure}[h]
	\begin{minipage}{0.33\textwidth}
		\subfigure[]{\includegraphics[width = \textwidth]{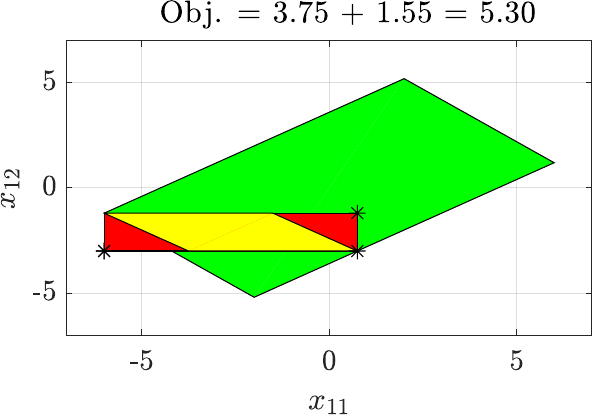}}
	\end{minipage}\hfill
	\begin{minipage}{0.33\textwidth}
		\subfigure[]{\includegraphics[width = \textwidth]{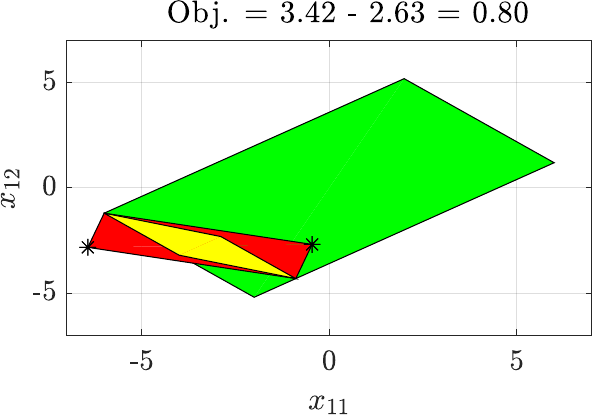}}
	\end{minipage}\hfill
	\begin{minipage}{0.33\textwidth}
		\subfigure[]{\includegraphics[width = \textwidth]{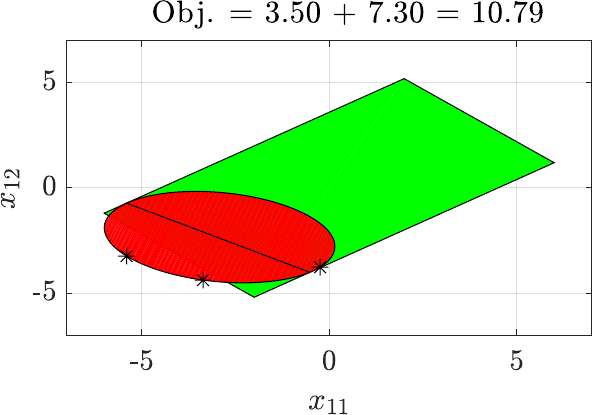}}
	\end{minipage}
	\caption{(green) Feasible set for $ x_1(w) $, (yellow) $ x_1(w) $ for Problem \eqref{problem_toyExample_Decent} with $ \mc S_1 $ as {(a) rectangular}, {(b) rotated by 15 degrees rectangular}, and {(c) rotated by 15 degrees ellipsoid}.}
	\label{fig::rotatedSets}
\end{figure}
To quantify the importance of the primitive set orientation in space, we conducted a second numerical experiment in which we use as primitive sets $ (i) $ a rotated rectangular set which we can independently scale its major and minor axis, and $ (ii) $ a rotated ellipsoid for which the major axis is forced to be $ 1.5 $ times larger than its minor axis. Illustrative examples of the effect in optimal region of such rotated sets are depicted in Fig. \ref{fig::rotatedSets}. We observe that as the shape of the communicated set deviates from the optimal one depicted in Fig. \ref{fig::centralizedSynthesis}(b) then suboptimality and privacy are increased. To clarify this finding, we repeated the simulation experiment for all possible rotations in the range $ [0,\,180] $ degrees of the rectangular and scaled ellipsoid sets. The results are reported in Fig. \ref{fig::costPlot}.  
We observe that if the rotation of the communicated sets matches the one of the set generated by the nested-information policy in Fig. \ref{fig::centralizedSynthesis}(b) then the solution resulting from the proposed decentralized method closely approximates the optimal one. If, however, this is not the case, then the cost considerably deviates and even infeasible instances of Problem \eqref{problem_toyExample_Decent} may appear.
\begin{figure}[h]
	\includegraphics[width=0.6\textwidth]{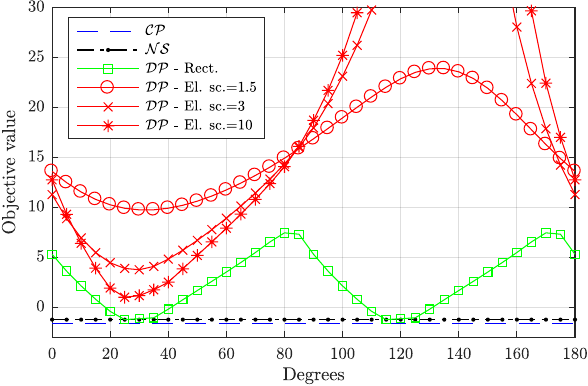}
	\caption{Cost associated with different shapes of communicated sets: (green) rotated rectangular sets, (red) rotated ellipsoids with principal axis ratio of ten.}
	\label{fig::costPlot}
\end{figure}

\subsection{Example 2: Spring-mass-damper}

\begin{figure}[h]
	\centering
	\begin{tikzpicture}
[main node/.style={draw,outer sep=0pt,thick}]

\tikzstyle{spring}=[thick,decorate,decoration={zigzag,pre length=0.3cm,post length=0.3cm,segment length=6}]
\tikzstyle{damper}=[thick,decoration={markings,  
  mark connection node=dmp,
  mark=at position 0.5 with 
  {
    \node (dmp) [thick,inner sep=0pt,transform shape,rotate=-90,minimum width=15pt,minimum height=3pt,draw=none] {};
    \draw [thick] ($(dmp.north east)+(2pt,0)$) -- (dmp.south east) -- (dmp.south west) -- ($(dmp.north west)+(2pt,0)$);
    \draw [thick] ($(dmp.north)+(0,-5pt)$) -- ($(dmp.north)+(0,5pt)$);
  }
}, decorate]

\node[main node] (M1) [minimum width=1cm,minimum height=1cm,xshift=-2.5cm] {$m_4$};
\node[main node] (M2) [minimum width=1cm,minimum height=1cm] {$m_3$};
\node[main node] (M3) [minimum width=1cm,minimum height=1cm,xshift=2.5cm] {$m_2$};
\node[main node] (M4) [minimum width=1cm,minimum height=1cm,xshift=5cm] {$m_1$};


\draw [spring] ($(M1.north east)!0.25!(M1.south east)$) -- ($(M2.north west)!0.25!(M2.south west)$);
\draw [damper] ($(M1.north east)!0.75!(M1.south east)$) -- ($(M2.north west)!0.75!(M2.south west)$);

\draw [spring] ($(M2.north east)!0.25!(M2.south east)$) -- ($(M3.north west)!0.25!(M3.south west)$);
\draw [damper] ($(M2.north east)!0.75!(M2.south east)$) -- ($(M3.north west)!0.75!(M3.south west)$);

\draw [spring] ($(M3.north east)!0.25!(M3.south east)$) -- ($(M4.north west)!0.25!(M4.south west)$);
\draw [damper] ($(M3.north east)!0.75!(M3.south east)$) -- ($(M4.north west)!0.75!(M4.south west)$);

\end{tikzpicture}
	\caption{A chain of four masses connected by springs and dampers.}
	\label{fig::Sim}
\end{figure}
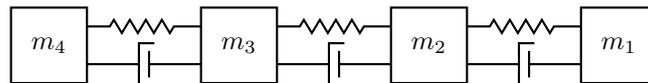
In this numerical study, we consider systems composed of masses that are connected by springs and dampers and arranged in a chain formation, demonstrated in Fig. \ref{fig::Sim}. The values of the masses, spring constants and damping coefficients are chosen uniformly at random from the intervals $ [5,\,10] $kg, $ [0.8,\,1.2] $N/m and $ [0.8,\,1.2] $Ns/m, respectively. We assume that each $ i $-th mass is an individual system with its state vector $ x_{i,t} \in \mb R^2 $ representing the position and velocity deviation from the system's equilibrium state, and its input $ u_{i,t} \in \mb R $ denoting the force applied to the $ i $-th mass. We assume that the states and inputs are constrained such that $ \|x_{i,t}\|_\infty \leq 6 $ and $ \|u_{i,t}\|_\infty \leq 4 $ for all times $ t $. Moreover, we assume that the dynamics of each mass is affected by additive exogenous disturbance $ \xi_{i,t} \in \mb R^2 $ which takes value in the bounded set $ \Xi = \{\xi \in \mb R^2: \|\xi\|_\infty \leq 1\} $.

The prediction control model is obtained by the discretization of the system's continuous dynamics using forward Euler with the sampling time $ 0.1 $s. Although inexact, Euler discretization is chosen as to preserve the distributed structure of the system. On the contrary, the discrete-time simulation model of the system is obtained using the exact zero-order hold discretization method with the sampling time $ 0.1 $s. The objective function of each system is of the form \eqref{eq::objFnc} with $ Q_{i} = \text{diag}(1,0) $ and $ R_i= 0.1 $.

\begin{figure}[h]
	\subfigure[]{\includegraphics[width = 0.4\textwidth]{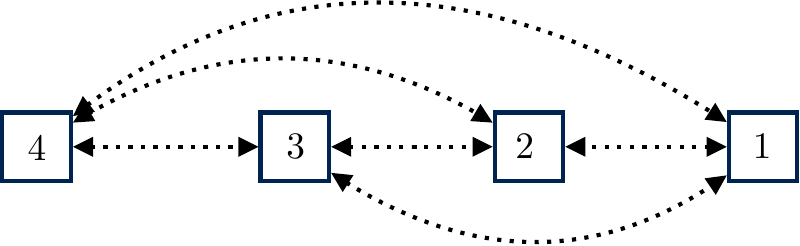}}
	
	\subfigure[]{\includegraphics[width = 0.4\textwidth]{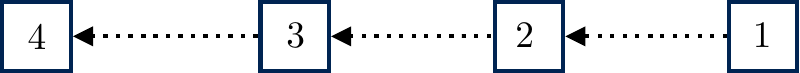}}
	\caption{Communication network associated with (a) \PCd and \PSCd, (b) \PDd for a system of four masses.}
	\label{fig::SMD_Network}
\end{figure}
The dynamics of this interconnected dynamical systems naturally admits a distributed non-nested structure. If one extends the information exchange network as to be nested, then communication needs to be established among all the agents in the system. Hence, the problem formulations for the synthesis of optimal centralized and nested-information controllers, as these are describe in \PCd and \PSCd, are exactly the same. On the contrary, the information exchange network associated with the proposed decentralized method given in \PDd matches the physical coupling network and requires the establishment of communication only between adjacent agents in the system. These information structures are graphically depicted in Fig. \ref{fig::SMD_Network}.

\begin{figure}[h]
	\begin{minipage}{0.48\textwidth}
		\subfigure[]{\includegraphics[width = \textwidth]{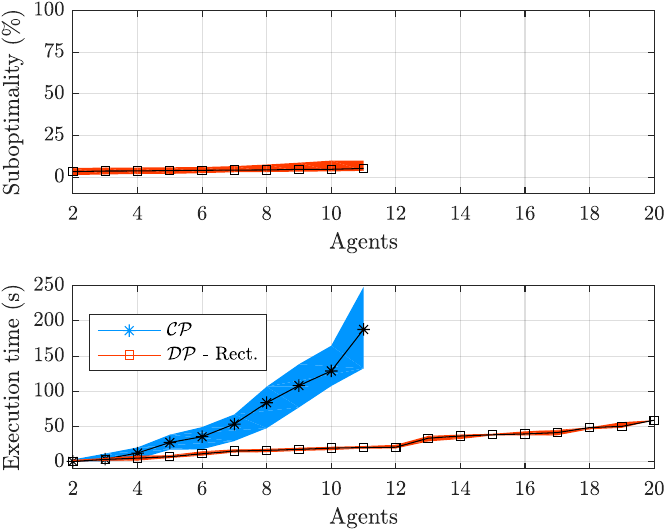}\label{fig::SMD_NT}}
	\end{minipage}\hfill
	\begin{minipage}{0.48\textwidth}
		\subfigure[]{\includegraphics[width = \textwidth]{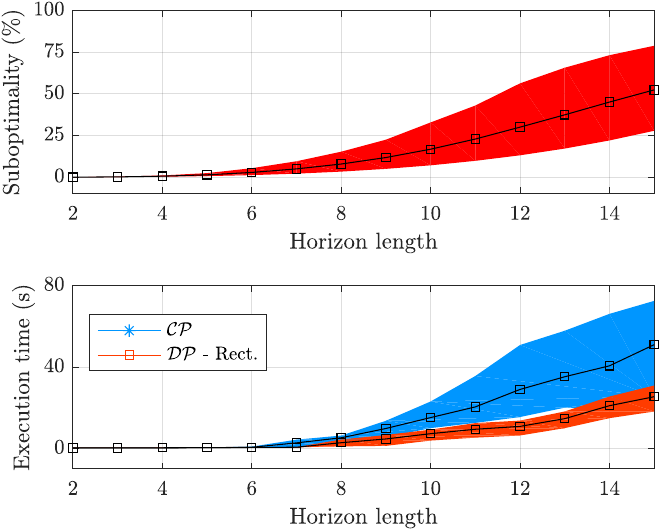}\label{fig::SMD_HT}}
	\end{minipage}
	\caption{Performance comparison as (a) number of agents and (b) horizon length increases. Comparison is performed using 100 Monte-Carlo simulations per system configuration.}
	\label{fig::SMD_NT_HT}
\end{figure}
We now investigate the effect of the horizon length and number of agents on the quality of the solution and the execution time. For each system configuration, we conducted 100 Monte Carlo simulations for uniformly chosen at random initial displacements of the masses in the system. In Fig. \ref{fig::SMD_NT}, we report the effect on this metrics with respect to the number of agents in the system when the horizon length is kept constant to $ T = 8 $. The area in blue is associated with the optimal centralized approach while the one in red with the proposed decentralized one. They both show the range of the respective values over the simulation experiments. We observe that the suboptimality level remains roughly the same with the system size which indicates that the introduced uncertainty as the number of agents increases is dissipated locally by interactions among adjacent agents. On the other hand, the execution time for solving the decentralized robust optimization problem only slightly increases with the number of agents, in contrast to the centralized approach for which the increase is linear. This can be explained considering that the number of decision variables present in \PDd is considerably lower than the one in \PCd. It is interesting to note that \PCd could only be formulated and solved within a reasonable amount of time for a limited number of agents in the network. 

In Fig. \ref{fig::SMD_HT}, we examine the effect of the horizon length on the solution quality and execution time for a system comprising two masses. Here, we observe that the suboptimality of the proposed decentralized method increases linearly with the horizon length. This suggests that the complexity introduced by the evolution of the dynamical system over the time is hard to be accurately enveloped by sets with a predefined orientation. This indicates that conservativeness is being accumulated over time. Moreover, we observe that although the decentralized method is computationally more efficient, the rate of increase in the execution time is the same for both approaches. This was expected since the affine policies make the size of the resulting linear problem to grow polynomially with respect to the horizon length.

\begin{figure}[h]
	\begin{minipage}{0.48\textwidth}
		\subfigure[]{\includegraphics[width = \textwidth]{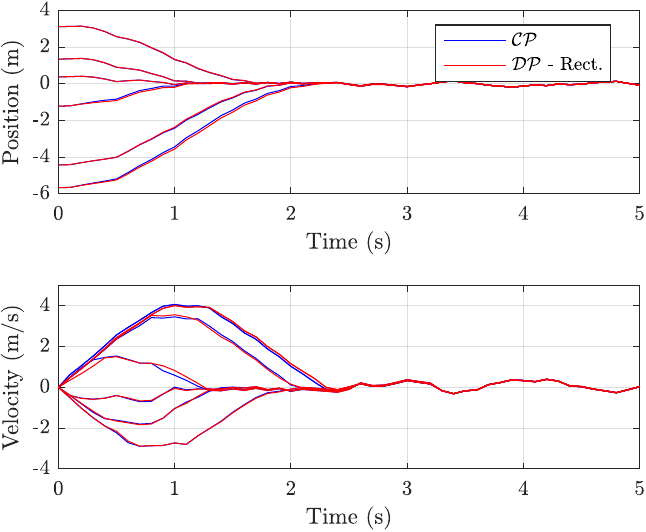}\label{fig::SMD_RH_tr}}
	\end{minipage}\hfill
	\begin{minipage}{0.48\textwidth}
		\subfigure[]{\includegraphics[width = \textwidth]{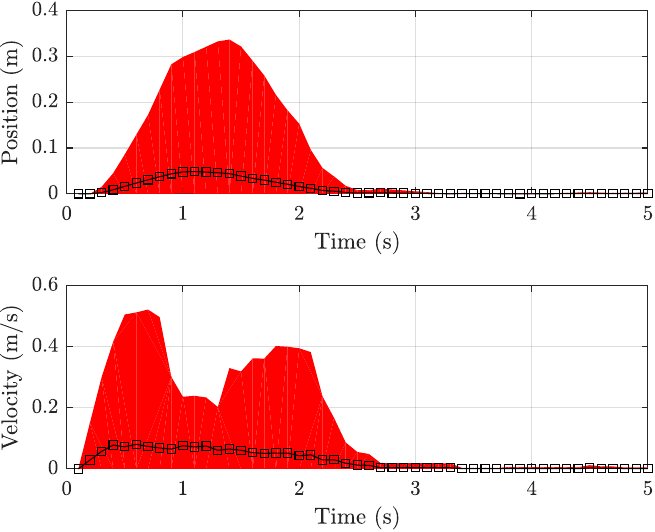}\label{fig::SMD_RH_err}}
	\end{minipage}
	\caption{Comparison of closed-loop trajectories generated by solving \PDd and \PCd; (a) trajectories of 5 randomly initialized simulations, (b) absolute deviation of 100 Monte-Carlo simulation.}
	\label{fig::SMD_RH} 
\end{figure}
Finally, the performance of the system is evaluated on a receding horizon implementation, i.e., the first input resulting from the respective centralized and distributed optimization problem is applied to the exact system dynamics, and the next state is evaluated. We conduct a closed-loop simulation experiment for a system comprising five masses and the prediction horizon of $ T = 15 $.  In Fig. \ref{fig::SMD_RH_tr}, the trajectories generated for the centralized and distributed designs are shown for five different simulation experiments. We observe that these trajectories are very similar, which illustrates the proximity in performance between the centralized and distributed designs. To better quantify the performance comparison between the centralized and distributed designs, we conducted Monte Carlo simulation experiments for uniformly chosen at random initial masses displacements which are also subject to distinct exogenous disturbances realizations. Fig. \ref{fig::SMD_RH_err} shows the error in position and velocity with respect to the simulation time. We observe that as the systems approach their equilibrium state, the errors between the centralized and decentralized approach converges to zero. We note, however, that in all instances this error in position and velocity is fairly small, although the respective open-loop error is high (c.f., Fig \ref{fig::SMD_HT}), which indicates the efficacy of the proposed method in closed-loop simulations.

\subsection{Example 3: Supply chain with quantity flexibility contracts}

\begin{figure}[h]
	\centering	
	\includegraphics[width = 0.8\textwidth]{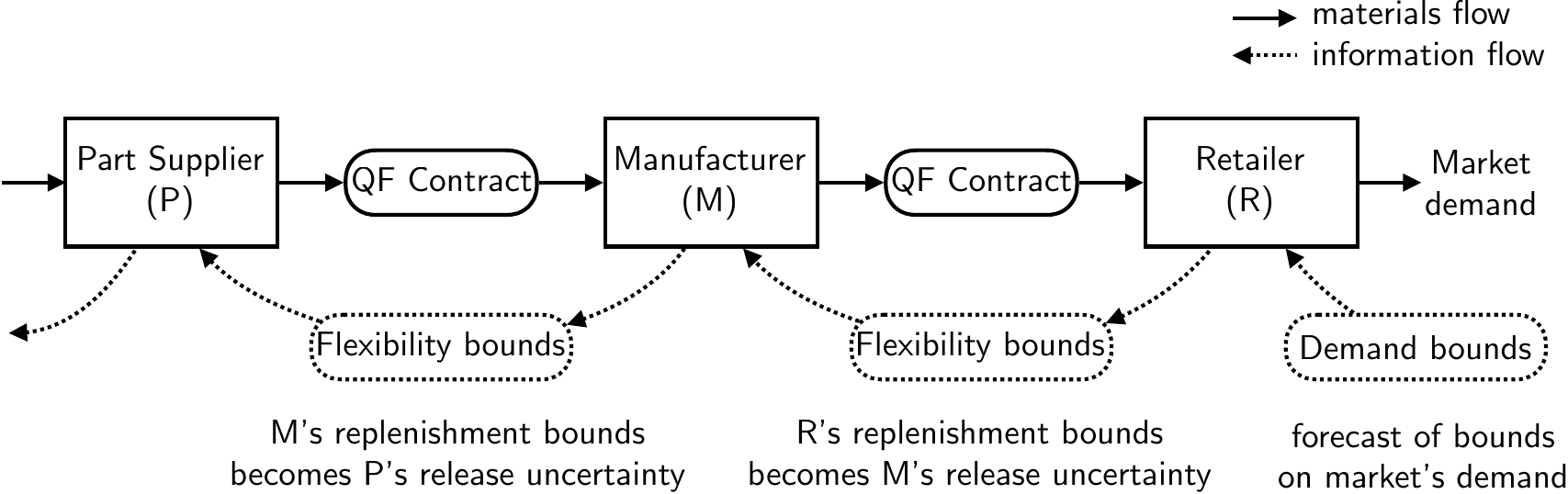}
	\caption{Modern supply chains operated in a distributing decision making authority.}
	\label{fig::QFC_v2}
\end{figure}
We now exhibit the performance of the proposed method as a contract design mechanism for supply chains operated in a decentralized fashion and we investigate its performance on various physical coupling topologies, e.g., chain, star, ring and mesh structures. As a running example, we adopt the decentralized operation of modern supply chains using quantity flexibility (QF) contracts, as this is described in \cite{Tsay1999}. Modern supply chains naturally operate in a distributing decision making authority since multiple sites worldwide work together to deliver product. In this distributed decision making context, each manufacturer (supplier) knows only the schedule of desired replenishments provided by its immediate retailer (manufacturer), and is only concerned with its own cost performance. To avoid ``mutual deception'' situations (e.g., some buyers inflate demand only to later disavow any undesired product \cite{Lee1997}) which increase the uncertainties and costs in the system \cite{Magee1967,Lovejoy1998}, a commonly used approach in the industry is to introduce the QF contract as a method for coordinating materials and information flows in distributed supply chains. This type of contract discourages the customer from overstating its needs by allowing a maximum upside revision of its scheduled replenishment. In this context, the supplier is obligated to cover any requests that remain within these limits. We graphically depict in Fig. \ref{fig::QFC_v2} the operation of a single-product, serial supply chain using QF contracts.

For each $ i $ agent, i.e., retailer, manufacturer or supplier, we consider inventory dynamics given as
\begin{equation}
I^i_{t+1,p} = I^i_{t,p} + R^i_{t,p} -D^i_{t,p} \text{ for }  p = 1,\ldots,P
\end{equation}
where $ I^i_{t,p} $ is the inventory stock, $ R^i_{t,p} $ is the replenishment and $ D^i_{t,p} $ is the customer demand for the $ p $-th out of $ P $ products. For manufacturers and/or suppliers, the demand originates from the replenishment schedule of another manufacturer or a retailer. If the $ i $-th agent is a retailer, then the product demand originates from the market and we assume that at each period $ t $ is given as 
\begin{equation}
D_{t,p}^i = \left \lbrace\begin{array}{ll}
2 + \sin\left(2\pi \dfrac{t}{T-1}\right) + \dfrac{1}{k} \sum \limits_{i=1}^{k} \Phi_{p,i}^i \xi_{t}^i & \text{for } p \text{ even},\\
2 + \cos\left(2\pi \dfrac{t}{T-1}\right) + \dfrac{1}{k} \sum \limits_{i=1}^{k} \Phi_{p,i}^i \xi_{t}^i & \text{for } p \text{ odd},
\end{array}\right.
\end{equation}
where $ \xi_{t,p}^i \in \Xi_{i,p} = [-1,1] $ and the factor loading coefficients $ \Phi_{p,i}^i $ are chosen uniformly at random from $ [-1,1] $. By construction, the product demands thus satisfy $ D_{t,p}^i \in [0,4] $ for all $ t \in \mc T $. To this end, we model the process of product making, i.e., combination of different materials, as follows 
\begin{equation}
R_{t,p}^i =  \sum \limits_{j=1}^{\mc N_i} \Psi_{p,j}^i D_{t,p}^j + w_{t,p}^i \text{ for } p = 1,\ldots,P
\end{equation}
where $ \Psi_{p,j}^i $ are chosen uniformly at random from $ [0,1] $ as to satisfy $ \sum \limits_{j=1}^{\mc N_i} \Psi_{p,j}^i = 1 $. Moreover, $ w_{t,p}^i \in \mc W_i = [-0.2,0] $ is a random variable that captures production delays and materials loss. Any excess demand is backlogged at a unit cost of $ c_B $ per period, and any excess inventory incurs a unit holding cost of $ c_H $ per period. The objective is to determine an ordering policy that minimizes the worst-case sum of backlogging and inventory holding costs over all anticipated demand realizations given as,
\begin{equation}
\max \limits_{\bs \xi, \bs w } \sum_{t \in \mc T} \sum_{p \in P_i} c_H \big[ I^i_{t,p}\big]_+ + c_B \big[-I^i_{t,p}\big]_+
\end{equation}
where $ c_B, c_H $ are chosen uniformly at random in the interval $ [0,2] $, and the maximum operator $ [\cdot]_+ $ can be easily removed using the epigraph representation.

\begin{figure}[h]
	\includegraphics[width=0.6\textwidth]{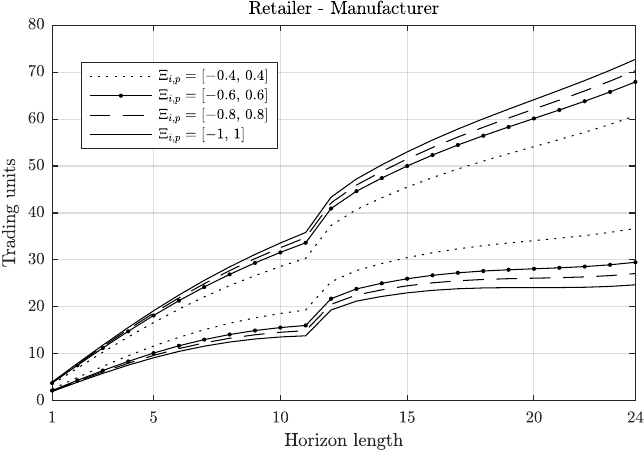}
	\caption{Effect of uncertainty on retailer-manufacturer communicated sets over a 24 stages horizon.}
	\label{fig::SC_UndEff}
\end{figure}
In the first simulation experiment, we investigate for the structure depicted in Fig. \ref{fig::QFC_v2} how the increase on the market demand uncertainty affects the bounds communicated from retailers to manufactures. The simulation configuration comprises one retailer, one manufacturer and one supplier and we symmetrically increase the size of the uncertainty set $ \Xi_{i,p} $ from $ 40 \% $ to $ 100 \% $ of its original size. We report the results in Fig. \ref{fig::SC_UndEff} where we observe that the size of the communicated bounds increases as the size of the uncertainty and the horizon length increase. Although, this was expected it is interesting to visualize how the bounds adapt on the demand profile pattern, highlighting the cooperative nature of the approach which strives to minimize the uncertainty mitigating backwards in the supply chain.

We now investigate the performance of the proposed approach on different supply chains topologies which vary on size. In particular, we focus our attention on the chain, ring, mesh and star topologies, graphically depicted in Fig. \ref{fig::SC_Topologies}.
\begin{figure}[h]
	\begin{minipage}[b]{0.3\textwidth}
		\centering
		\subfigure[]{\includegraphics[width = \textwidth]{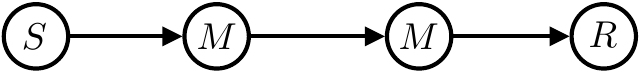}}
		
		\subfigure[]{\includegraphics[width = \textwidth]{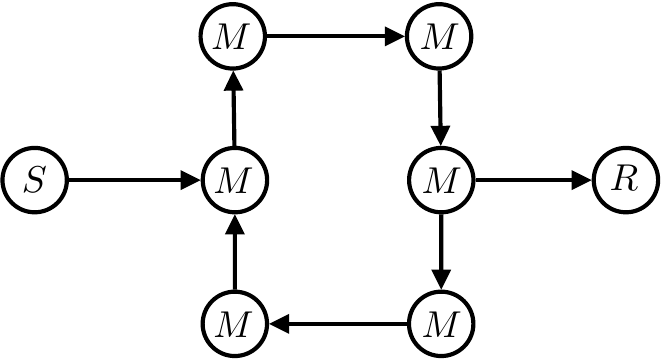}}
	\end{minipage}\hspace*{1cm}
	\begin{minipage}[b]{0.3\textwidth}
		\centering
		\subfigure[]{\includegraphics[width = 0.8\textwidth]{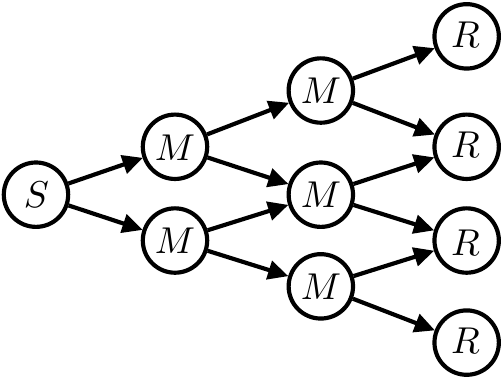}}
		
		\subfigure[]{\includegraphics[width = \textwidth]{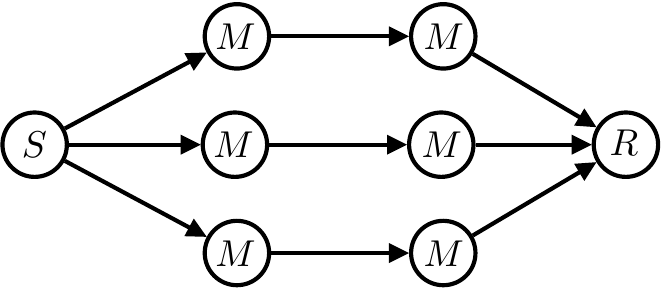}}
	\end{minipage}
	\caption{Coupling topologies under consideration: (a) chain, (b) mesh, (c) ring, (d) star. }
	\label{fig::SC_Topologies}
\end{figure}

\begin{figure}[b]
	\includegraphics[width=0.6\textwidth]{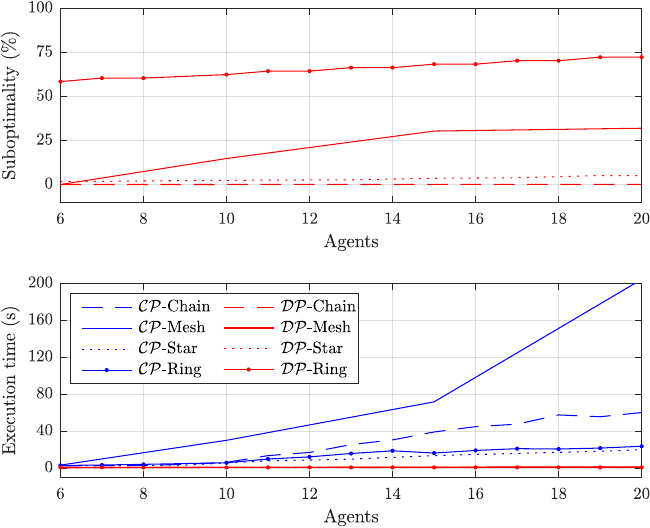}
	\caption{Performance comparison for different coupling topologies as the number of agents in the network increases.}
	\label{fig::SC_Comparison}
\end{figure}
To assess the efficacy of the proposed method, we compare the objective value and execution time of the decentralized solution given by \PDdATC solved with rectangular primitive sets, and the solution generated by the centralized \PCd. We report the results of this comparison for networks admitting from 6 to 20 agents in Fig. \ref{fig::SC_Comparison}. 
We observe that for all structures under investigation the increase on the size of the agents in the network only slightly affects the suboptimality increase. For structures which are loosely coupled, as the chain and star, the suboptimality of the decentralized solution is close to zero. On the other hand, the worst-performance is observed for the ring structure since due to its cyclic nature its hard to obtain a decentralized solution which approximates the centralized one and is based only on neighbors communication. The large benefit of the proposed decentralized approach is however identified on the execution time needed to generate its solution. As shown in Fig. \ref{fig::SC_Comparison}, even for small networks with loose coupling the decentralized approach can be 50 times faster than the centralized, while in the extreme case of a large mesh structure this efficiency gap can grow up to 200 times faster.

\section{Conclusions} \label{sec::Conclusion}
This paper presents a decentralized control framework for the problem of cooperatively managing the operation of large-scale networks of constrained dynamical systems. The proposed method requires each system to communicate to its neighboring systems bounds on the evolution of its states. This minimal communication provides a certain degree of privacy to each agent since the exact characteristics of the dynamics within each system are not revealed. Moreover, the method is suitable to manage problems involving large-scale systems since its underlying minimum communication scheme preserves the original decoupled structure of the problem. To optimize the size of the communicated bounds, a non-convex infinite-dimensional problem is formulated. This computationally intractable optimization problem is approximated by a convex finite-dimensional one, using methods from robust optimization. It is shown that these approximations retain the decoupled structure of the problem making it amendable to distributed computation algorithms. In the numerical study, it is shown that the proposed decentralized method achieves highly computationally efficient solutions even for networks involving a large number of agents. Depending on the structure of underlying physical coupling network and the form of the communication bounds, the proposed method is shown to generate solutions which closely approximate those of the centralized formulation.

\appendix
\section{Proofs of Propositions and Theorems}
\setcounter{equation}{0}
\renewcommand{\theequation}{\Alph{section}.\arabic{equation}}

\begin{proof}[\large \bf Proof of Theorem \ref{thm::1}]

	We show that every feasible solution of \PSCd is a feasible solution to \PCd. Let $ \bs \Phi_i $ for all $ i \in \mc M $ be any feasible policy in \PSCd. Starting with $ \chi_{i,1} = x_{i,1}$, the state of agent $ i $ at time $ t $ are given as
	\begin{equation}\label{eq::thm1::eq1}
		\begin{array}{@{}r@{\;}l}
		x_{i,t} &=\displaystyle A_{i,1}^t x_{i,1} + \sum_{\tau=1}^{t-1} \Big(A_{i,\tau+1}^t B_{i,\tau} [\chi_{j,\tau}(\bs \xi_{\oNj}^{\tau-1})]_{j\in \mc N_i} + A_{i,\tau+1}^t D_{i,\tau} \Phi_{i,\tau}(\bs \xi_{\oNi}^{\tau-1}) + A_{i,\tau+1}^t E_{i,\tau} \xi_{i,\tau} \Big) \\
		&=: \chi_{i,t}(\bs \xi_{\oNi}^{t-1})
		\end{array}
	\end{equation}
	where $ A_{i,\tau}^t = A_{i,\tau}A_{i,\tau+1}\dots A_{i,t-1} $ for $ \tau<t $ and $ A_{i,t}^t = I $. The last implication follows from the fact that $ \ol{\mc N}_i \supseteq \ol{\mc N}_j $ for all $ j \in \mc N_i $ since the network admits a partially nested structure. Given \eqref{eq::thm1::eq1}, it is easy to verify that for each agent $ i $ its dynamics and constraints in \PSCd only depend on $ \bs \xi_{\oNi} $. Hence, any feasible solution to \PSCd is also feasible to the following optimization problem:
	\begin{equation}\label{eq::thm1::eq2}
	\begin{array}{l}
	\text{minimize}  \;\; \displaystyle\sum\limits_{i = 1}^M \max\limits_{\bs \xi \in \Xi} J_i(\bs x_i,\bs u_i) \\
	\!\!\!\left.\begin{array}{r@{\;}l@{}}
	\text{subject to}& \bs \phi_{i}(\cdot) \in \mc {SC}(\bs \xi_\oNi),\;\bs u_i = \bs \Phi_{i}(\bs \xi_\oNi)\\
	& \bs x_{i} = f_{i}\big(\bs x_{\Ni}, \bs u_{i}, \bs \xi_{i}\big)\\
	& (\bs x_{i}, \bs u_{i}) \in \mc O_i
	\end{array}\right \rbrace \forall \bs \xi_\Mu \in \Xi_\Mu,\;
	\forall i \in \mc M
	\end{array}
	\end{equation}
	Additionally, they achieve the same objective value since they share the same objective function. This shows equivalence of \PSCd and Problem \eqref{eq::thm1::eq2}. The relation between \PCd and \PSCd stated in the theorem now follows immediately since the two problems share the same constraints and objective function, and $\mc {SC}(\bs \xi_\oNi)\subseteq\mc {SC}(\bs \xi)$ for all $i\in\mc M$.
\end{proof}

\begin{proof}[\large \bf Proof of Theorem \ref{thm::2}]

	The statement is proved by induction using similar theoretical tools to \cite[Prop. 2.1]{HGK:2011b}. The statement holds for $ t = 1 $ since the initial state, $ x_{i,1} $, is known for every $ i \in \mc M $; therefore, functions $ \psi_{i,1}(x_{i,1}, \bs \zeta_{\Ni,1}) $ and $ \Psi_{i,1}(\bs \zeta_{\Ni,1}) $ can always be constructed such
	\begin{equation}
		\psi_{i,1}(x_{i,1}, \bs \zeta_{\Ni,1}) = \Psi_{i,1}(\bs \zeta_{\Ni,1})
	\end{equation}
	
	Assume now that the statement holds for all $ 1 < \tau \leq t-1 $, i.e., there exist policies $ \psi_i(\cdot) $ and $ \Psi_i(\cdot) $ such that $ \psi_{i,\tau}(\bs x_i^{\tau}, \bs \zeta_\Ni^{\tau}) = \Psi_{i,\tau}(\bs \xi_i^{\tau-1}, \bs \zeta_\Ni^{\tau}) $. In the sequel, we show that the statement also holds for $ \tau = t $. From \eqref{eq::stateDynamics}, we have that,
	\begin{equation}\label{eqqv1}
	\begin{array}{r@{}l}
	x_{i,t} &=\displaystyle A_{i,1}^t x_{i,1} + \sum_{\tau=1}^{t-1} \Big( A_{i,\tau+1}^t B_{i,\tau} \bs \zeta_{\Ni,\tau} + A_{i,\tau+1}^t D_{i,\tau} \Psi_{i,\tau}(\bs \xi_i^{\tau-1}, \bs \zeta_\Ni^\tau) +  A_{i,\tau+1}^t E_{i,\tau} \xi_{i,\tau} \Big) \\
	&=: \chi_{i,t}(\bs \xi_{i}^{t-1}, \bs \zeta_{\Ni}^{t-1})
	\end{array}
	\end{equation}
	where $ A_{i,\tau}^t = A_{i,\tau}A_{i,\tau+1}\dots A_{i,t-1} $ for $ \tau<t $ and $ A_{i,t}^t = I $. Moreover, it holds that,
	\begin{equation}\label{eqq1v1}
	\begin{array}{r@{}l}
	\xi_{i,t-1} &= E_{i,t-1}^{+}\big(x_{i,t} - A_{i,t-1} x_{i,t-1} + B_{j,t-1} \bs \zeta_{\Ni,t-1} + D_{i,t-1} \psi_{i,t-1}(\bs x_i^{t-1}, \bs \zeta_\Ni^{t-1})\big) \\
	&=: \rho_{i,t}(\bs x_{i}^{t}, \bs \zeta_\Ni^{t-1})
	\end{array}
	\end{equation}
	where $ E^+_{i,t} := (E_{i,t}^\top E_{i,t})^{-1}E^\top_{i,t} $ is the left inverse of $E_{i,t}$ since it is full rank.
	
	The relation \eqref{eqqv1} implies that given a feasible policy $ \psi_{i,t}(\cdot) $ for \PDs, we can construct a feasible policy for \PDd as,
	\begin{equation}\label{eqq2v1}
	\psi_{i,t}(\bs x_i^t, \bs \zeta_\Ni^t) = \psi_{i,t}(\chi_i^t(\bs \xi_{i}^{t-1},\bs \zeta_{\Ni}^t), \bs \zeta_\Ni^t):= \Psi_{i,t}(\bs \xi_i^{t-1}, \bs \zeta_\Ni^t)
	\end{equation}
	The claim follows from the fact that the composition of continuous differentiable functions is a continuous differentiable function. Hence, the policy $ \psi_{i,t}(\cdot) $ will also be feasible in \PDd since the two problems have the same pointwise constraints. Additionally, they achieve the same objective value since they share the same objective function. 
	
	Similarly, the relation \eqref{eqq1v1} implies that given a feasible policy $ \Psi_{i}(\cdot) $ for \PDd, we can construct a feasible policy for \PDs as,
	\begin{equation}\label{eqq3v1}
	\Psi_{i,t}(\bs \xi_i^{t-1}, \bs \zeta_\Ni^t) = \Psi_{i,t}(\rho_i^t(\bs x_{i}^{t},\bs \zeta_\Ni^{t-1}), \bs \zeta_\Ni^t) := \psi_{i,t}(\bs x_i^t, \bs \zeta_\Ni^t)
	\end{equation}
	The claim follows from the fact that the composition of continuous differentiable functions is a continuous differentiable function. Hence, the policy $ \Psi_{i,t}(\cdot) $ will also be feasible in \PDs since the two problems have the same pointwise constraints. Additionally, they achieve the same objective value since they share the same objective function. 
\end{proof}

\begin{proof}[\large \bf Proof of Theorem \ref{thm::3}]	
	We show that every feasible solution of \PDd is feasible in \PSCd. Let $ (\bs \Psi_i, \mc X_i) $ for all $ i \in \mc M $ be feasible in \PDd. Since the state of agent $i$ evolve according to \eqref{eq::stateDynamics}, we can conclude that at time $t$ we have	 
	\begin{equation}
	\begin{array}{r@{}l}
	x_{i,t} &=\displaystyle A_{i,1}^t x_{i,1} + \sum_{\tau=1}^{t-1} \Big( A_{i,\tau+1}^t B_{i,\tau} \bs \zeta_{\Ni,\tau} + A_{i,\tau+1}^t D_{i,\tau} \Psi_{i,\tau}(\bs \xi_i^{\tau-1}, \bs \zeta_\Ni^\tau) + A_{i,\tau+1}^t E_{i,\tau} \xi_{i,\tau}  \Big)\\
	&=: \chi_{i,t}(\bs \xi_{i}^{t-1}, \bs \zeta_{\Ni}^{t-1})
	\end{array}
	\end{equation}
	where where $ A_{i,\tau}^t = A_{i,\tau}A_{i,\tau+1}\dots A_{i,t-1} $ for $ \tau<t $ and $ A_{i,t}^t = I $. Note that $ \bs\chi_{i}(\bs \xi_{i},\bs \zeta_{\Ni}) = [\chi_{i,t}(\bs \xi_{i}^{t-1}, \bs \zeta_{\Ni}^{t-1})]_{t \in \mc T_+} \in \mc X_i $ for all $ \bs \xi_i \in \Xi_i $ and $ \bs \zeta_{\Ni} \in \mc X_\Ni $ due to the feasibility of \PDd. To show that $ \bs \Psi_i $ is feasible in \PSCd, we first construct the state of agent $i$ which evolves according to $\bs x_{i} = f_{i}\big(\bs  x_{\Ni}, \bs \Psi_{i}(\bs{\xi}_i, \bs \zeta_\Ni), \bs \xi_{i}\big)$. Starting with $\hat \chi_{i,1} = x_{i,1}$, we have that
	\begin{equation}\label{eq::c1}
	\begin{array}{r@{}l}
	x_{i,t} &=\displaystyle A_{i,1}^t x_{i,1} + \sum_{\tau=1}^{t-1} \Big(A_{i,\tau+1}^t B_{i,\tau} [\widehat \chi_{j,\tau}(\bs \xi_{\oNj}^{\tau-1}]_{j\in \mc N_i} + A_{i,\tau+1}^t D_{i,\tau} \Psi_{i,\tau}(\bs \xi_i^{\tau-1}, \bs \zeta_{\Ni}^{\tau}) + A_{i,\tau+1}^t E_{i,\tau} \xi_{i,\tau} \Big)\\
	&=\displaystyle A_{i,1}^t x_{i,1} + \sum_{\tau=1}^{t-1} \Big(A_{i,\tau+1}^t B_{i,\tau} [\widehat \chi_{j,\tau}(\bs \xi_{\oNj}^{\tau-1}]_{j\in \mc N_i} + A_{i,\tau+1}^t D_{i,\tau} \Psi_{i,\tau}(\bs \xi_i^{\tau-1}, [\widehat{\bs \chi}_{j}^\tau (\bs \xi_{\oNj}^{\tau-1}]_{j\in \mc N_i}) + A_{i,\tau+1}^t E_{i,\tau} \xi_{i,\tau} \Big)\\
	& = \chi_{i,t}(\bs \xi_i^{t-1},[\widehat \chi^{t-1}_{j}(\bs \xi_{\oNj}^{t-2})]_{j\in \mc N_i})\\
	& =: \widehat\chi_{i,t}(\bs \xi_{\oNi}^{t-1}).
	\end{array}
	\end{equation}
	where the implication follows because $\bs{\widehat\chi}_{i}([\bs \xi_{j}^{t-1}]_{j\in \oNi}) = [\widehat\chi_{i,t}(\bs \xi_{\oNi}^{t-1})]_{t \in \mc T_+}\in\mc X_i$ for all $ \bs \xi_\oNi \in \Xi_\oNi$ and $ i \in \mc M $.
	For each $i\in \mc M$, we consider the decision $\bs{\hat \Phi}_i\in\mc {SC}(\bs \xi_\oNi)$ defined through
	\begin{equation}\label{eq::c2}
	\widehat \Phi_{i,t}(\bs \xi_{\oNi}^{t-1}):= \Psi_{i,t}(\bs \xi_i^{t-1},[\widehat \chi^{t}_{j}(\bs \xi_{\oNj}^{t-1})]_{j\in \mc N_i})
	\end{equation}
	Notice that \eqref{eq::c2} defines a valid policy construction since $\bs{\widehat\chi}_{i}([\bs \xi_{j}^{t-1}]_{j\in \oNi})\in\mc X_i$ for all $ \bs \xi_\oNi \in \Xi_\oNi$. It remains to show that $\bs \Psi_{i}$ is feasible also for the constraints of \PSCd. We do so using deduction, as follows:  
	\begin{equation}
	\begin{array}{rll}
	& \big(\bs\chi_{i}(\bs \xi_{i},\bs \zeta_{\Ni}), \bs \Psi_{i}(\bs{\xi}_i,\bs \zeta_{\Ni})\big) \in \mc O_i, & \forall \bs \xi_i \in {\Xi}_i, \forall \bs \zeta_\Ni \in \mc X_\Ni,\\
	\implies & \big(\bs\chi_{i}(\bs \xi_{i},[\bs{\widehat\chi}_{j}(\bs \xi_\oNj)]_{j \in \mc N_i}), \bs \Psi_{i}(\bs{\xi}_i,[\bs{\widehat\chi}_{j}(\bs \xi_\oNj)]_{j \in \mc N_i})\big)  \in \mc O_i, & \forall \bs \xi_\oNi \in {\Xi}_\oNi, \\
	\implies & \big(\bs{\widehat\chi}_{i}(\bs \xi_\oNi), \bs{\widehat\Phi}_{i}(\bs \xi_\oNi)\big)  \in \mc O_i, & \forall \bs \xi_\oNi \in {\Xi}_\oNi,
	\end{array}
	\end{equation}
	The first implication follows from \eqref{eq::c1} and the fact that $\bs{\widehat\chi}_{i}(\bs \xi_\oNi) \subseteq \mc X_i$ for all $\bs{\xi}_\oNi\in\Xi_\oNi$, while the second implication follows from \eqref{eq::c1} and \eqref{eq::c2}.
	Finally, this feasible solution attains a value for the objective function of \PDd which is equal or larger than the value attained for the objective function of \PSCd, that is:
	\begin{equation*}
	\begin{array}{@{\,}r@{\,}l@{\,}}
	&\sum\limits_{i = 1}^M \max\limits_{\bs \xi_i \in \Xi_i, \bs \zeta_{\Ni} \in \mc X_{\Ni}} 
	J_i\big(\bs\chi_{i}(\bs \xi_{i},\bs \zeta_{\Ni}), \bs \Psi_{i}(\bs{\xi}_i,\bs \zeta_{\Ni})\big)\\
	\geq& \sum\limits_{i = 1}^M \max\limits_{\bs \xi_\oNi \in \Xi_\oNi}\; J_i\big(\bs\chi_{i}(\bs \xi_{i},[\bs{\widehat\chi}_{j}(\bs \xi_\oNj)]_{j \in \mc N_i}), \bs \Psi_{i}(\bs{\xi}_i,[\bs{\widehat\chi}_{j}(\bs \xi_\oNj)]_{j \in \mc N_i})\big) \\
	=& \sum\limits_{i = 1}^M \max\limits_{\bs \xi_\oNi \in \Xi_\oNi}\; J_i\big(\bs{\widehat\chi}_{i}(\bs \xi_\oNi), \bs{\widehat\Phi}_{i}(\bs \xi_\oNi)\big)
	\end{array}
	\end{equation*}
	where again the first implication follows from \eqref{eq::c1} and the fact that $\bs{\widehat\chi}_{i}(\bs \xi_\oNi) \subseteq \mc X_i$ for all $\bs{\xi}_\oNi\in\Xi_\oNi$, while the second implication follows from \eqref{eq::c1} and \eqref{eq::c2}.
\end{proof}

\begin{proof}[\large \bf Proof of Theorem \ref{thm::4}]
	We consider the 3-Satisfiability problem (3-SAT) for a set $ N = \{1,\ldots,n\} $ literals and $ m $ clauses, which seeks to find a solution $ x \in \{0,1\}^{n} $ that satisfies
	\begin{equation}\label{3sat}
	x_{i,1} + x_{i,2} + (1-x_{i,3}) \ge 1, \forall i = 1,\ldots,m
	\end{equation}
	where $ x_{i,1}, x_{i,2}, x_{i,1}  \in \{0,1\} $ are auxiliary variables that allow us to reformulate the disjunction of $ n $ literals into a conjunction of $ m $ clauses \cite{Cook1971}.
	
	We also consider the following robust optimization problem with decision-dependent uncertainty set:
	\begin{equation}
	\label{problem_3sat}
	\begin{array}{@{}l@{}}
	\min \sum_{i=1}^m \max(-\alpha_i)  \\
	\left.\begin{array}{r@{\;}l@{}}
	\st &  x_{i,1}, x_{i,2} \ge 0, \\
	& \alpha_i \in \mc X_i(x_{i,1}, x_{i,2}) = \{\alpha_i: \alpha_i \ge x_{i,1}, \alpha_i \ge x_{i,2}, \alpha_i \leq 1\},\\
	& (1-x_{i,3}) \leq \alpha_i, \;\forall \alpha_i \in \mc X_i(x_{i,1}, x_{i,2}),\\
	& x_{i,3} \ge 0, x_{i,3} \leq 1,
	\end{array}\right \rbrace \begin{array}{@{}l}
	\forall i = 1,\ldots,m.
	\end{array}
	\end{array}
	\end{equation}
	where $ \alpha_i \in \mb R $ are auxiliary decision variables. If we assign the decision variables $ x_{i,1},x_{i,2}, \alpha_i $ to agent $ i_1 $ and $ x_{i,3} $ to agent $ i_2 $ with $ i = 1,\ldots,m $ then it is easy to verify that Problem \eqref{problem_3sat} is an instance of \PDdCCa involving $ 2m $ agents. We now prove using similar theoretical tools as those developed in \cite[Thm. 1]{Nohadani2016} that the optimal value of Problem \eqref{problem_3sat} is $ -m $ if and only if the 3-SAT problem in \eqref{3sat} has a solution. We do so as follows: $ (i) $ we assume that the 3-SAT problem has a solution, i.e., there exist $ x \in \{0,1\}^{n} $ such that \eqref{3sat} is satisfied. This implies that either of the terms $ x_{i,1} $, $ x_{i,2} $ or $ 1-x_{i,3} $ is one, hence $ \alpha_i = 1 $ for all $ i = 1,\ldots, M $ due to the constraints in Problem \eqref{problem_3sat}. This leads to an optimal objective value of at least $ -m $ for Problem \eqref{problem_3sat}; $ (ii) $ We now prove by contradiction that the optimal objective of Problem \eqref{problem_3sat} can not be smaller than $ -m $. If so then there must exists $ \alpha_i $ such that $ \alpha_i > 1 $ which contradicts the constraint $ \alpha_i \leq 1 $. Hence, if Problem \eqref{problem_3sat} is solved to optimality, i.e., $ \sum_{i=1}^m \max(-\alpha_i) = -m $, then $ \alpha_i = 1 $ for all $ i = 1, \ldots, m $. In this case, at least one of the terms $ x_{i,1} $, $ x_{i,2} $ or $ 1-x_{i,3} $ is one otherwise the uncertainty set $ \mc X_i $ is not a singleton and an $ \alpha_i = \max\{x_{i,1},x_{i,2},1-x_{i,3}\} < 1 $ is also valid which would then give an optimal objective $ \sum_{i=1}^m \max(-\alpha_i) > -m $. This however clearly contradicts the assumption that Problem \eqref{problem_3sat} is solved to optimality. 
	
	Therefore, solving the optimization Problem \eqref{problem_3sat} is equivalent to trying to find a feasible solution for the 3-SAT problem. However, as shown in \cite{Cook1971} the feasibility of the 3-SAT problem is known NP-hard, hence the optimization Problem \eqref{problem_3sat} is also NP-hard.
\end{proof}

\begin{proof}[\large \bf Proof of Proposition \ref{prop::nCc}]
	The recession cone of a set $ \mc S_i $ is defined as $ \textrm{recc}(\mc S_i) = \{\bs \nu_i \in \mb R^{n_x^i}: s_i + \lambda \nu_i \in \mc S_i, \forall s_i \in \mc S_i, \,\lambda \ge 0 \} $ \cite{Gorissen2014}. The fact that $ \mc S_i $ is bounded implies that the recession cone of $ \mc S_i $ is empty, i.e., $ \textrm{recc}(\mc S_i) = \{0\} $. We now show that,
	\begin{equation*}
	\mc X_{\FS} = \Big\{(\bs x_i, y_i, z_i)\,:\, \exists \bs s_i \in\mathbb{R}^{N_{x,i}} \text{ s.t. }\bs x_{i} = \sum_{k=1}^{K_i} y_{i,k} P_{i,k} \bs s_{i} + z_{i},\; G_{i,k} P_{i,k} \bs s_{i} \preceq_{\mc K_{i,k}}  g_{i,k},\; k=1,\ldots,K_i \Big\} 
	\end{equation*}
	is equivalent to
	\begin{equation*}
	\wh{\mc X}_{\FS} =\Big\{(\bs x_i, y_i, z_i)\,:\, \exists \bs  \nu_{i,k}\in\mathbb{R}^{N_{x,i}} \text{ s.t. } \bs x_{i} = \sum_{k=1}^{K_i} P_{i,k} \bs \nu_{i,k} + z_{i},\; G_{i,k} P_{i,k} \bs \nu_{i,k}  \preceq_{\mc K_{i,k}}  y_{i,k}  g_{i,k},\;k=1,\ldots,K_i \Big\}
	\end{equation*}
	It is easy to verify that this deduction also holds true in case that $ y_{i,k} $ are positive scalar by using the substitution $\bs{\nu}_{i,k} = y_{i,k} \bs{s}_i$. In the case that any $ y_{i,k} = 0 $ then it remains to show that the only feasible solution is $\bs{\nu}_{i,k} = 0 $ so that the equality $\bs{\nu}_{i,k} = y_{i,k} \bs{s}_i$ to hold. Assume that this is not the case, i.e., there exist $ \bs{\nu}_{i,k} \neq 0 $. Then, $ \bs{\nu}_{i,k} \in \textrm{recc}(S_i) $ which means that the $ \mc S_i $ recedes in the direction of  $ \bs{\nu}_{i,k} $. However, this is a contradicts the boundedness of $ \mc S_i $. 
\end{proof}

\subsection*{Preliminaries for the proof of Theorem \ref{thm::5}}
The following lemma establishes constraint satisfaction between \PDdATC and \PDdAT.
\begin{lemma}\label{lem::1}
	Given $ y_i $ and $ z_i $ such that $ \wh{\mc X}_i(y_i, z_i) $, then for any two functions $ f_{i,t} $ and $ g_{i,t} $, it holds:
	\begin{equation}\label{eq::op}
	\begin{array}{r@{\,}ll}
	& f_{i,t}(\bs \zeta_\Ni^t, \bs \xi_i^t) \leq 0,& \forall \bs \zeta_\Ni \in \wh{\mc X}_\Ni(y_\Ni, z_\Ni), \,\forall \bs \xi_i \in \Xi_i, \\
	\Rightarrow & f_{i,t}\big([L_j^t(\bs s_j^t)]_{j\in \mc N_i}), \bs \xi_i^t\big) \leq 0, &\forall \bs s_\Ni \in \mc S_\Ni,\,\forall \bs \xi_i \in \Xi_i,
	\end{array}
	\end{equation}
	and
	\begin{equation}\label{eq::opInv}
	\begin{array}{r@{\,}ll}
	& g_{i,t}(\bs s_\Ni^t, \bs \xi^t)\leq 0,& \forall \bs s_\Ni \in \mc S_\Ni,\,\forall \bs \xi_i \in \Xi_i, \\
	\Rightarrow & g_{i,t}\big([R_j^t({\bs \zeta_j^t})]_{j\in \mc N_i}, \bs \xi_i^t\big) \leq 0, & \forall \bs \zeta_\Ni \in \wh{\mc X}_\Ni(y_\Ni, z_\Ni), \,\forall \bs \xi_i \in \Xi_i.
	\end{array}
	\end{equation}
\end{lemma}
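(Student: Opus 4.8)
The plan is to reduce both implications to two set-mapping facts: that $L_j$ sends $\mc S_j$ into $\wh{\mc X}_j(y_j,z_j)$, and that $R_j$ sends $\wh{\mc X}_j(y_j,z_j)$ into $\mc S_j$. Once these are available, \eqref{eq::op} and \eqref{eq::opInv} both follow by instantiating the universally quantified hypothesis at the appropriate point. The only thing to check carefully is that this instantiation respects the stage structure: since $L_{j,t}$ and $R_{j,t}$ act on individual time slices, a history $\bs s_j^t$ (resp. $\bs\zeta_j^t$) is mapped to the genuine length-$t$ prefix of the full-trajectory image $L_j(\bs s_j)\in\wh{\mc X}_j$ (resp. $R_j(\bs\zeta_j)\in\mc S_j$), so the hypothesis — which quantifies over \emph{all} $\bs\zeta_\Ni\in\wh{\mc X}_\Ni$ (resp. all $\bs s_\Ni\in\mc S_\Ni$) — may be evaluated at that prefix.

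First I would establish the inclusion $L_j(\mc S_j)\subseteq\wh{\mc X}_j(y_j,z_j)$. Writing $Y_j=\sum_k y_{j,k}P_{j,k}$, the image $L_j(\bs s_j)=Y_j\bs s_j+z_j=\sum_k y_{j,k}P_{j,k}\bs s_j+z_j$ is exactly the defining form of a point of $\mc X_j(y_j,z_j)$ in \eqref{eq::SFSv1}, and by Proposition~\ref{prop::nCc} this set coincides with $\wh{\mc X}_j(y_j,z_j)$; the inclusion is therefore immediate. Consequently, for \eqref{eq::op}, given any $\bs s_\Ni\in\mc S_\Ni$ the point $[L_j(\bs s_j)]_{j\in\mc N_i}$ lies in $\wh{\mc X}_\Ni(y_\Ni,z_\Ni)$, its length-$t$ prefix equals $[L_j^t(\bs s_j^t)]_{j\in\mc N_i}$, and so the hypothesis $f_{i,t}(\bs\zeta_\Ni^t,\bs\xi_i^t)\le 0$ applies directly to yield the conclusion.

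The harder direction, $R_j(\wh{\mc X}_j(y_j,z_j))\subseteq\mc S_j$, is the main obstacle, precisely because $R_j\circ L_j$ equals the projection $Y_j^+Y_j$ rather than the identity (only $L_j\circ R_j=\mt{id}$ holds), so the pseudo-inverse in \eqref{app::map2} discards the directions where $y_{j,k}=0$ and one must verify that the resulting point still satisfies every conic constraint defining $\mc S_j$. Here I would exploit that the $P_{j,k}$ are mutually orthogonal projections summing to the identity, whence $Y_j^+=\sum_{k:\,y_{j,k}>0}y_{j,k}^{-1}P_{j,k}$ and $Y_j^+Y_j=\sum_{k:\,y_{j,k}>0}P_{j,k}$. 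Taking $\bs\zeta_j=Y_j\bs s_j+z_j$ with $\bs s_j\in\mc S_j$ gives $R_j(\bs\zeta_j)=Y_j^+Y_j\bs s_j$, so $P_{j,k}R_j(\bs\zeta_j)=P_{j,k}\bs s_j$ when $y_{j,k}>0$ and $P_{j,k}R_j(\bs\zeta_j)=0$ when $y_{j,k}=0$. In the first case $G_{j,k}P_{j,k}R_j(\bs\zeta_j)=G_{j,k}P_{j,k}\bs s_j\preceq_{\mc K_{j,k}}g_{j,k}$ because $\bs s_j\in\mc S_j$; in the second the left-hand side vanishes and feasibility reduces to $g_{j,k}\succeq_{\mc K_{j,k}}0$, which I would secure from $0\in\mc S_j$ (true for the primitive sets under consideration, and automatic when $Y_j$ is full column rank, in which case $R_j$ is a genuine inverse and $R_j(\bs\zeta_j)=\bs s_j$ outright). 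This yields $R_j(\bs\zeta_j)\in\mc S_j$, and \eqref{eq::opInv} then follows exactly as in the easy direction by instantiating $g_{i,t}(\bs s_\Ni^t,\bs\xi^t)\le 0$ at $[R_j^t(\bs\zeta_j^t)]_{j\in\mc N_i}$, with the left-inverse identity $L_{j,t}(R_{j,t}(x_{j,t}))=x_{j,t}$ guaranteeing that the reconstructed histories are consistent.
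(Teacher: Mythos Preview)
Your proof is correct and follows the same strategy as the paper: both reduce the two implications to the set inclusions $L_j(\mc S_j)\subseteq\wh{\mc X}_j(y_j,z_j)$ and $R_j(\wh{\mc X}_j(y_j,z_j))\subseteq\mc S_j$, and then instantiate the universally quantified hypothesis at the mapped point. The paper phrases \eqref{eq::op} as a one-line contradiction and dispatches \eqref{eq::opInv} with ``similar arguments'', whereas you work out the pseudo-inverse case $y_{j,k}=0$ explicitly; the extra care is well placed, since that direction is the only nontrivial step and the paper's proof leaves it implicit.
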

\begin{proof}
	We prove \eqref{eq::op} by contradiction. Assume that $ f_{i,t}(\bs \zeta_\Ni^t, \bs \xi_i^t) \leq 0, \forall \bs \zeta_\Ni \in \wh{\mc X}_\Ni(y_\Ni, z_\Ni), \,\forall \bs \xi_i \in \Xi_i, $ and there exist $ \bs s_\Ni \in \mc S_\Ni $ such that $ f_{i,t}\big([L_j^t(\bs s_j^t)]_{j\in \mc N_i}), \bs \xi_i^t\big) > 0 $. Considering that $ [L_j^t(\bs s_j^t)]_{j \in \mc N_i} \in \wh{\mc X}(y_\Ni, z_\Ni) $ for all $ \bs s_\Ni \in \mc S_\Ni $ by construction, this leads to a contradiction. 
	The proof of \eqref{eq::opInv} follows similar arguments. 	
\end{proof}

\begin{proof}[\large \bf Proof of Theorem \ref{thm::5}]
	We show that every feasible solution of \PDdATC is feasible in \PDdAT. Let $ (\wh{\bs \Gamma}_i, \wh{\mc X}_i) $ for all $ i \in \mc M $ be a feasible solution in \PDdATC. Since the state of agent $i$ evolve according to \eqref{eq::stateDynamics}, we can conclude that at time $t$ we have	 
	\begin{equation*}
	\begin{array}{r@{}l}
	x_{i,t} &=\displaystyle A_{i,1}^t x_{i,1} + \sum_{\tau=1}^{t-1} \Big(A_{i,\tau+1}^t B_{i,\tau} (Y_{\Ni,\tau} \bs s_{\Ni,\tau} + z_{\Ni,\tau}) + A_{i,\tau+1}^t D_{i,\tau} \wh{\Gamma}_{i,\tau}(\bs \xi_i^{\tau-1}, \bs s_\Ni^\tau) + A_{i,\tau+1}^t E_{i,\tau} \xi_{i,\tau} \Big) \\
	&=: \wh{\chi}_{i,t}(\bs \xi_{i}^{t-1}, \bs s_{\Ni}^{t-1})
	\end{array}
	\end{equation*}
	where where $ A_{i,\tau}^t = A_{i,\tau}A_{i,\tau+1}\dots A_{i,t-1} $ for $ \tau<t $ and $ A_{i,t}^t = I $. To show that $ \wh{\bs \Gamma}_i $ is feasible in \PDdAT, we first construct the state of agent $i$ which evolves according to $\bs x_{i} = f_{i}\big(\bs \zeta_{\Ni}, \wh{\bs \Gamma}_{i}(\bs{\xi}_i,\bs s_\Ni), \bs \xi_{i}\big)$. Starting with $\widetilde \chi_{i,1} = x_{i,1}$, we have that
	\begin{equation}\label{eq::c1_p1}
	\begin{array}{r@{}l}
	x_{i,t} &=\displaystyle A_{i,1}^t x_{i,1} + \sum_{\tau=1}^{t-1} \Big(A_{i,\tau+1}^t B_{i,\tau} \bs \zeta_{\Ni,\tau} + A_{i,\tau+1}^t D_{i,\tau} \wh{\Gamma}_{i,\tau}(\bs \xi_i^{\tau-1}, \bs s_\Ni^\tau) + A_{i,\tau+1}^t E_{i,\tau} \xi_{i,\tau} \Big) \\
	&=\displaystyle A_{i,t-1}^t x_{i,1} + \sum_{\tau=1}^{t-1} \Big(A_{i,\tau+1}^t B_{i,\tau} \bs \zeta_{\Ni} + A_{i,\tau+1}^t D_{i,\tau} \wh{\Gamma}_{i,\tau}(\bs \xi_i^{\tau-1}, [R^{\tau}_j(\bs \zeta_j^{\tau})]_{j\in \mc N_i}) + A_{i,\tau+1}^t E_{i,\tau} \xi_{i,\tau} \Big)\\
	& = \wh{\chi}_{i,t}\left(\bs \xi_i^{t-1},[R^{t-1}_j(\bs \zeta_j^{t-1})]_{j\in \mc N_i}\right)\\
	& =: \wt{\chi}_{i,t}\left(\bs \xi_i^{t-1},\bs \zeta_\Ni^{t-1}\right).
	\end{array}
	\end{equation}
	where the implications follow due to the mapping \eqref{app::map2}.
	For each $i\in \mc M$, we consider the decision $ \wt{\bs \Psi}_{i}(\cdot) \in \mc{SC}(\bs \xi_i, \bs \zeta_{\Ni}) $ defined through
	\begin{equation}\label{eq::c2_p1}
	\wt{\Psi}_{i,t}\left(\bs \xi_i^{t-1},\bs \zeta_\Ni^{t}\right)= \wh{\Gamma}_{i,t}\left(\bs \xi_i^{t-1},[R^{t}_j(\bs \zeta_j^{t})]_{j\in \mc N_i}\right).
	\end{equation}
	Notice that \eqref{eq::c2_p1} defines a valid policy construction due to the mapping \eqref{app::map2}. It remains to show that $\wh{\bs \Gamma}_{i}$ is feasible also for the constraints of \PDdAT. We do so using deduction, as follows:  
	\begin{equation}
	\begin{array}{rll}
	& \big(\wh{\bs\chi}_{i}(\bs \xi_{i},\bs s_{\Ni}), \wh{\bs \Gamma}_{i}(\bs{\xi}_i,\bs s_{\Ni})\big) \in \mc O_i, & \forall \bs \xi_i \in {\Xi}_i, \forall \bs s_\Ni \in \mc S_\Ni,\\
	\implies & \big(\wh{\bs\chi}_{i}(\bs \xi_{i},[R_j(\bs \zeta_j)]_{j \in \mc N_i}),\wh{\bs \Gamma}_{i}(\bs{\xi}_i,[R_j(\bs \zeta_j)]_{j \in \mc N_i})\big) \in \mc O_i,& \forall \bs \xi_i \in {\Xi}_i, \forall \bs \zeta_\Ni \in \wh{\mc X}_\Ni,\\
	\implies &\big(\wt{\bs \chi}_{i}(\bs \xi_i,\bs \zeta_\Ni),\wt{\bs \Psi}_{i}(\bs \xi_i, \bs \zeta_{\Ni})\big) \in \mc O_i,& \forall \bs \xi_i \in {\Xi}_i, \forall \bs \zeta_\Ni \in \wh{\mc X}_\Ni,
	\end{array}
	\end{equation}
	where the implications directly follow from \eqref{eq::c1_p1} and \eqref{eq::c2_p1}, and Lemma \ref{lem::1}. Same reasoning applies to all constraints in the problem formulation. This feasible solution attains the same value, $ \ell $, for the objective functions of \PDdATC and \PDdAT, that is:
	\begin{equation}
	\begin{array}{rl}
	\ell & = \sum\limits_{i = 1}^M \max\limits_{\bs \xi_i \in \Xi, \bs s_\Ni \in \mc S_\Ni} J_i\big(\wh{\bs\chi}_{i}(\bs \xi_{i},\bs s_{\Ni}), \wh{\bs \Gamma}_{i}(\bs{\xi}_i,\bs s_{\Ni})\big) \\& = \left \lbrace \begin{array}{l}
	J_i\big(\wh{\bs\chi}_{i}(\bs \xi_{i},\bs s_{\Ni}), \wh{\bs \Gamma}_{i}(\bs{\xi}_i,\bs s_{\Ni})\big) \leq\ell_i,~~ \forall \bs \xi_i \in {\Xi}_i, \forall \bs s_\Ni \in \mc S_\Ni\\
	\sum_{i = 1}^{M} \ell_i = \ell,
	\end{array} \right \rbrace \\
	& = \left \lbrace \begin{array}{l}
	J_i\big(\wh{\bs\chi}_{i}(\bs \xi_{i},[R_j(\bs \zeta_j)]_{j \in \mc N_i}),\wh{\bs \Gamma}_{i}(\bs{\xi}_i,[R_j(\bs \zeta_j)]_{j \in \mc N_i})\big) \leq\ell_i,~~ \forall \bs \xi_i \in {\Xi}_i, \forall \bs \zeta_\Ni \in \wh{\mc X}_\Ni\\
	\sum_{i = 1}^{M} \ell_i = \ell,
	\end{array} \right \rbrace \\
	& = \sum\limits_{i = 1}^M \max\limits_{\bs \xi_i \in {\Xi}_i, \bs \zeta_\Ni \in \wh{\mc X}_\Ni} J_i\big(\wt{\bs \chi}_{i}(\bs \xi_i,\bs \zeta_\Ni),\wt{\bs \Psi}_{i}(\bs \xi_i, \bs \zeta_{\Ni})\big) = \ell.
	\end{array}
	\end{equation}
	The implications directly follow from \eqref{eq::c1_p1} and \eqref{eq::c2_p1}, and Lemma \ref{lem::1}.
	
	Similarly, we now show that every feasible solution of \PDdAT is feasible in \PDdATC. Let $ (\wt{\bs \Psi}_i, \wh{\mc X}_i) $ for all $ i \in \mc M $ be feasible in \PDdAT. Since the state of agent $i$ evolve according to \eqref{eq::stateDynamics}, we can conclude that at time $t$ we have	 
	\begin{equation}
	\begin{array}{r@{}l}
	x_{i,t} &=\displaystyle A_{i,1}^t x_{i,1} + \sum_{\tau=1}^{t-1} \Big(A_{i,\tau+1}^t B_{i,\tau} \bs \zeta_{\Ni,\tau} + A_{i,\tau+1}^t D_{i,\tau} \wt{\Psi}_{i,\tau}(\bs \xi_i^{\tau-1}, \bs \zeta_\Ni^\tau) + A_{i,\tau+1}^t E_{i,\tau} \xi_{i,\tau} \Big)\\
	&=: \wt{\chi}_{i,t}(\bs \xi_{i}^{t-1}, \bs \zeta_{\Ni}^{t-1})
	\end{array}
	\end{equation}
	To show that $ \wt{\bs \Psi}_i $ is feasible in \PDdATC, we first construct the state of agent $i$ which evolves according to $\bs x_{i} = f_{i}\big(Y_{\Ni} \bs s_{\Ni} + z_{\Ni},\wt{\bs \Psi}_{i}(\bs{\xi}_i, \bs \zeta_{\Ni}), \bs \xi_{i}\big)$. Starting with $\widehat \chi_{i,1} = x_{i,1}$, we have that
	\begin{equation}\label{eq::c1_p2}
	\begin{array}{r@{}l}
	x_{i,t}&=\displaystyle A_{i,1}^t x_{i,1} + \sum_{\tau=1}^{t-1} \Big(A_{i,\tau+1}^t B_{i,\tau} (Y_{\Ni,\tau} \bs s_{\Ni,\tau} + z_{\Ni,\tau}) + A_{i,\tau+1}^t D_{i,\tau} \wt{\Psi}_{i,t}(\bs \xi_i^{\tau-1},\bs \zeta_{\Ni}^{\tau}) + A_{i,\tau+1}^t E_{i,\tau} \xi_{i,\tau} \Big) \\ 
	&=\displaystyle A_{i,1}^t x_{i,1} + \sum_{\tau=1}^{t-1} \Big(A_{i,\tau+1}^t B_{i,\tau} [L_{j,\tau}(\bs s_{j,\tau})]_{j\in \mc N_i} + A_{i,\tau+1}^t D_{i,\tau} \wt{\Psi}_{i,t}(\bs \xi_i^{\tau-1},[L^{\tau}_j(\bs s_j^{\tau})]_{j\in \mc N_i}) + A_{i,\tau+1}^t E_{i,\tau} \xi_{i,\tau} \Big) \\
	& = \wt{\chi}_{i,t}\left(\bs \xi_i^{t-1},[L^{t-1}_j(\bs s_j^{t-1})]_{j\in \mc N_i}\right)\\
	& =: \wh{\chi}_{i,t}\left(\bs \xi_i^{t-1},\bs s_\Ni^{t-1}\right).
	\end{array}
	\end{equation}
	where the implications follow due to the mapping \eqref{app::map1}.
	For each $i\in \mc M$, we consider the decision $ \wh{\bs \Gamma}_{i}(\cdot) \in \mc{SC}(\bs \xi_i, \bs s_{\Ni}) $ defined through
	\begin{equation}\label{eq::c2_p2}
	\wh{\Gamma}_{i,t}\left(\bs \xi_i^{t-1},\bs s_\Ni^{t}\right)= \wt{\Psi}_{i,t}\left(\bs \xi_i^{t-1},[L^{t}_j(\bs s_j^{t})]_{j\in \mc N_i}\right).
	\end{equation}
	Notice that \eqref{eq::c2_p2} defines a valid policy construction due to the mapping \eqref{app::map1}. It remains to show that $\wh{\bs \Gamma}_{i}$ is feasible also for the constraints of \PDdATC. We do so using deduction, as follows:  
	\begin{equation}
	\begin{array}{rll}
	& \big(\wt{\bs\chi}_{i}(\bs \xi_{i},\bs \zeta_{\Ni}) ,\wt{\bs \Psi}_{i}(\bs{\xi}_i,\bs \zeta_{\Ni})\big) \in \mc O_i,& \forall \bs \xi_i \in {\Xi}_i, \forall \bs \zeta_\Ni \in \wh{\mc X}_\Ni,\\
	\implies & \big(\wt{\bs\chi}_{i}(\bs \xi_{i},[L_j(\bs s_j)]_{j \in \mc N_i}), \wt{\bs \Psi}_{i}(\bs{\xi}_i,[L_j(\bs s_j)]_{j \in \mc N_i})\big) \in \mc O_i,& \forall \bs \xi_i \in {\Xi}_i, \forall \bs s_\Ni \in {\mc S}_\Ni,\\
	\implies &\big(\wh{\bs \chi}_{i}(\bs \xi_i,\bs s_\Ni), \wh{\bs \Gamma}_{i}(\bs \xi_i, \bs s_{\Ni})\big) \in \mc O_i, & \forall \bs \xi_i \in {\Xi}_i, \forall \bs s_\Ni \in {\mc S}_\Ni,
	\end{array}
	\end{equation}
	where the implications directly follow from \eqref{eq::c1_p2} and \eqref{eq::c2_p2}, and Lemma \ref{lem::1}. Same reasoning applies to all constraints in the problem formulation. This feasible solution attains the same value, $ \ell $, for the objective functions of \PDdAT and \PDdATC, that is:
	\begin{equation}
	\begin{array}{rl}
	\ell & = \sum\limits_{i = 1}^M \max\limits_{\bs \xi_i \in {\Xi}_i, \bs \zeta_\Ni \in \wh{\mc X}_\Ni} J_i\big(\wt{\bs\chi}_{i}(\bs \xi_{i},\bs \zeta_{\Ni}) ,\wt{\bs \Psi}_{i}(\bs{\xi}_i,\bs \zeta_{\Ni})\big) \\& = \left \lbrace \begin{array}{l}
	J_i\big(\wt{\bs\chi}_{i}(\bs \xi_{i},\bs \zeta_{\Ni}) ,\wt{\bs \Psi}_{i}(\bs{\xi}_i,\bs \zeta_{\Ni})\big) \leq\ell_i,~~ \forall \bs \xi_i \in {\Xi}_i, \forall \bs \zeta_\Ni \in \wh{\mc X}_\Ni,\\
	\sum_{i = 1}^{M} \ell_i = \ell,
	\end{array} \right \rbrace \\
	& = \left \lbrace \begin{array}{l}
	J_i\big(\wt{\bs\chi}_{i}(\bs \xi_{i},[L_j(\bs s_j)]_{j \in \mc N_i}), \wt{\bs \Psi}_{i}(\bs{\xi}_i,[L_j(\bs s_j)]_{j \in \mc N_i})\big) \leq\ell_i,~~ \forall \bs \xi_i \in {\Xi}_i, \forall \bs s_\Ni \in {\mc S}_\Ni,\\
	\sum_{i = 1}^{M} \ell_i = \ell,
	\end{array} \right \rbrace \\
	& = \sum\limits_{i = 1}^M \max\limits_{\bs \xi_i \in {\Xi}_i, \bs s_\Ni \in {\mc S}_\Ni} J_i\big(\wh{\bs \chi}_{i}(\bs \xi_i,\bs s_\Ni), \wh{\bs \Gamma}_{i}(\bs \xi_i, \bs s_{\Ni})\big) = \ell.
	\end{array}
	\end{equation}
	The implications directly follow from \eqref{eq::c1_p2} and \eqref{eq::c2_p2}, and Lemma \ref{lem::1}.
\end{proof}

\bibliographystyle{unsrt}
\bibliography{darivianos_abrv,Papers}

\end{document}